\newtheorem{assumption}{Assumption}
\newcommand{\ome}{\omega}
\newcommand{\E}{\mathbb{E}}
\newcommand{\PP}{\mathbb{P}}
\newcommand{\vep}{\varepsilon}
\newcommand{\BY}{\mathbf{Y}}
\newcommand{\assign}{:=}
\newcommand{\backassign}{=:}
\newcommand{\cdummy}{\cdot}
\newcommand{\nobracket}{}
\newcommand{\op}[1]{#1}
\newcommand{\tmmathbf}[1]{\ensuremath{\boldsymbol{#1}}}
\newcommand{\tmop}[1]{\ensuremath{\operatorname{#1}}}
\newcommand{\tmtextbf}[1]{\text{{\bfseries{#1}}}}
\newcommand{\tmtextit}[1]{\text{{\itshape{#1}}}}
\newenvironment{itemizeminus}{\begin{itemize} }{\end{itemize}}
\newenvironment{proof}{\noindent\textbf{Proof\ }}{\hspace*{\fill}$\Box$\medskip}
\newtheorem{thm}{Theorem}[section]
\newtheorem{theorem}[thm]{Theorem}
\newtheorem{lemma}[thm]{Lemma}
\newtheorem{corollary}[thm]{Corollary}
\newtheorem{remark}[thm]{Remark}
\newtheorem{proposition}[thm]{Proposition}
\newtheorem{definition}[thm]{Definition}
\numberwithin{equation}{section}
\newcommand{\fatnorm}[1]{%
  \begin{tikzpicture}[baseline={([yshift=-.5ex]current bounding box.center)}]
    \node[inner sep=0pt, outer sep=0pt] (m) {$#1$};
    \draw[line width=0.8mm] ([xshift=-1.5mm]m.south west) -- ([xshift=-1.5mm]m.north west);
    \draw[line width=0.8mm] ([xshift=1.5mm]m.south east) -- ([xshift=1.5mm]m.north east);
  \end{tikzpicture}
}
\begin{document}

\title{Randomisation of rough stochastic differential equations}


\author{Peter K.~Friz}
\affil{TU Berlin and WIAS Berlin}

\author{Khoa L\^e}
\affil{School of Mathematics, University of Leeds}

\author{Huilin Zhang}
\affil{Shandong U. and Humboldt U.}


\maketitle

\begin{abstract}
  Rough stochastic differential equations (RSDEs) are  common generalisations
  of It{\^o} SDEs and Lyons RDEs and have emerged as new tool in  several areas
  of applied probability, including non-linear stochastic filtering, pathwise
  stochastic optimal control, volatility modelling in finance and mean-fields
  analysis of common noise system.
  
  We here take a unified perspective on rough It{\^o} processes  and discuss in
  particular when and how they become, upon randomisation, ``doubly
  stochastic'' It{\^o} processes, and what can be said about their conditional laws. 
\end{abstract}


\tableofcontents

\section{Introduction}

Inspired by the recent theory of rough stochastic differential equations (RSDEs) of \cite{FHL21x},
\textit{rough It{\^o} processes} are continuous adapted stochastic processes, on some stochastic basis $(\Omega, \mathfrak{F},
(\mathfrak{F}_t)_{t \geq 0}, \mathbb{P})$, of the form $X = X (\omega, \mathbf{Y}) = X^{\mathbf{Y}} (\omega)$ with dynamics\footnote{All processes are multidimensional; extension to infinite dimensions are possible but not the purpose of this note.}
\begin{equation}
  \label{RIP1} d X_t = A_t (\omega, \mathbf{Y}) d t + \Sigma_t (\omega,
  \mathbf{Y}) d B_t + (F_t, F'_t) (\omega, \mathbf{Y}) d \mathbf{Y}_t,
  \quad 0 \leqslant t \leqslant T,
\end{equation}
where $B = B (\omega)$ is Brownian motion and $\mathbf{Y} = (Y, \mathbb{Y})
\in \mathscr{C}   {  \equiv \mathscr{C}_T  \subset  \mathscr{C}^{\alpha}  ([0, T], \mathbb{R}^{d_Y})}$, a class of deterministic rough paths, on a fixed H\"older 
scale with exponent $\alpha \in (1/3,1/2]$. All coefficients are assumed progressive
in $(t, \omega)$ and $(F_t, F'_t) (\omega, \mathbf{Y})$ is a stochastic
controlled rough path in the sense of \cite{FHL21x}; this allows to give
bona fide integral meaning to \eqref{RIP1} where the reader can also find an It{\^o} formula for such processes. 
\eqref{RIP1} generalizes classical
It{\^o}-diffusions $X = X (\omega)$, with dynamics
\[ d X_t = a_t (\omega) d t + \sigma_t (\omega) d B_t ,\]
which of course accommodates solutions to It{\^o} stochastic differential equations,
\[ d X_t = b_t (X_t, \omega) d t +  \sigma_t (X_t, \omega) d B_t . \]
From a rough path perspective, (\ref{RIP1}) generalizes rough integration, with dynamics
\begin{equation}
  \label{Dav} d X_t = (F_t, F'_t) d \mathbf{Y}_t \Leftrightarrow (\delta
  X)_{s, t} \approx_{3 \alpha} F_s (\delta Y)_{s, t} + F'_s \mathbb{Y}_{s, t},
  (\delta F)_{(s, t)} \approx_{2 \alpha} F_s' (\delta Y)_{s, t},
\end{equation}
where $(F, F') = (F, F') (\mathbf{Y})$ is a given $Y$-controlled rough path.\footnote{ By $ \Psi \approx_{\gamma } \Phi$ we mean $| \Psi_{s,t} - \Phi_{s,t} | \le C |t-s|^\gamma$, for some constant $C$, uniformly on $[0,T]$.}
This accommodates solutions to ``contextualized'' (by which we mean explicit
$\mathbf{Y}$-dependence in the coefficient fields) rough differential equation (RDE)
\begin{equation}
  \label{RDE1} d X_t = f (X_t ; \mathbf{Y}) d \mathbf{Y}_t
  \end{equation}
via the usual Davie expansion $F_t \assign f (X_t ; \mathbf{Y}), \  F'_t
\assign ((D f) f) (X_t ; \mathbf{Y})$. This also accommodates the case of regular time-dependence $f (t,X_t ; \mathbf{Y})$  in \eqref{RDE1}; and more generally RDEs of the form
\begin{equation}
  \label{RDE2} d X_t = (f_t, f_t') (X_t ; \mathbf{Y}) d {\mathbf{Y}_t}  ,
\end{equation}
  in which case $F_t =f_t (X_t ; \mathbf{Y}), \ F'_t = ((D f_t) f_t + f_t')
(X_t ; \mathbf{Y})$. 
 Our main motivation for $\eqref{RIP1}$ comes from the
recent theory of rough SDEs \cite{FHL21x}
\begin{equation}
  \label{RSDE1} d X_t = b_t (X_t, \omega ; \mathbf{Y }) d t + \sigma_t (X_t,
  \omega ; \mathbf{Y }) d B_t + (f_t, f'_t) (X_t, \omega ; \mathbf{Y}) d
  \mathbf{Y}_t,
\end{equation}
solutions $X = X^{\mathbf{Y}}$ of which provide natural examples of rough
It{\^o} processes, with
\[ (F_t, F'_t) (\omega, \mathbf{Y}) = (f_t (X_t, \omega ; \mathbf{Y}),\ ((D
   f_t) f_t + f_t') (X_t, \omega ; \mathbf{Y})) . \]
Extensions to mean-field a.k.a. McKean-Vlasov rough SDEs have been developed in \cite{FHL25p}. 

The purpose of this note is to systematically relate the ``hybrid'' rough stochastic processes seen above to their ``doubly'' stochastic counterparts, when $\mathbf{Y}$ above is replaced
by some stochastic driving noise, often (but not always) an independent Brownian noise (seen as rough path). Conversely, one can start with doubly stochastic situations and ask what happens under partial conditioning 
of the noise. To the best of our knowledge, such questions were first raised in the context of filtering \cite{CDFO13}. 

In Section \ref{sec:mot}, we give motivation from non-linear stochastic filtering, volatility modelling in finance, pathwise stochastic optimal control and mean-fields SDE with common noise system; topics also closely related to \cite{CD18,CD18b}. We do not strive for precise statements, but try to explain how rough stochastic processes naturally emerge, together with motivation for what is to come: Section \ref{sec:RIP} deals with the mathematics of randomisation of rough It{\^o} processes, one obstacle being the potentially uncountable number of rough paths that figure as parameter in \eqref{RIP1}. 
The remaining section discuss applications to RSDE and then also mean-field RSDEs. 
The body of work surrounding RSDEs, control and common noise also resonates with early remarks made by the authors of \cite{CD14}, conjecturing the use of rough paths for conditioned forward stochastic dynamics (in their context of mean field games with common noise). 

\medskip
\noindent {\bf Acknowledgement}: PKF and HZ acknowledge support from DFG CRC/TRR 388 ``Rough
Analysis, Stochastic Dynamics and Related Fields'', Projects A07, B04 and B05. Part of this work was carried out during a visit of the first author to Shandong University. 
KL acknowledges supports from EPSRC
[grant number EP/Y016955/1]. HZ is partially supported by the Fundamental Research Funds for the Central Universities, NSF of China and Shandong (Grant Numbers 12031009, ZR2023MA026), Young Research Project of Tai-Shan (No.tsqn202306054).

\section{Motivating examples} \label{sec:mot}

\subsection{Non-linear stochastic filtering}

We follow \cite{CDFO13, BFLZ25p}.
One has an observation process $Y_t = \int_0^t h (X_s, Y_s) ds
+ B^{\perp}_t$ where the signal process $X$ is assumed to have ``correlated''
dynamics,
\begin{align} \label{equ:sig}
d X_t & = \tilde{b} (t, X_t, Y_t) d t + \sigma (t, X_t, Y_t) d B_t
        (\omega) + f (t, X_t, Y_t) d B^{\perp}_t (\omega)\nonumber \\
      & = b (t, X_t, Y_t) d t + \sigma (t, X_t, Y_t) d B_t (\omega) + f (t,
        X_t, Y_t) d Y_t (\omega) .                                
        \end{align}                                               
        Here, $(B, B^{\perp})$ and $(B, Y) $ are independent Brownians under some
        original measure $\mathbb{P}^o$ and $\mathbb{P}$, respectively,
        related by the Girsanov formula                           
        \begin{eqnarray*}                                         
        \left. \frac{d\mathbb{P}^o}{d\mathbb{P}} \right|_{\mathcal{F}_t} = \exp
        \left( \int_0^t h (s, X_s, Y_s) d Y_s - \frac{1}{2} \int_0^t | h (s, X_s,
        Y_s) |^2 ds \right) \backassign \exp (I_t) .              
        \end{eqnarray*}                                           
        Formally at least, the conditional dynamics of $X_t$ given the observation $\{
        Y_t : 0 \leqslant t \leqslant T \}$ should be captured by the solution $X^{\mathbf{Y}}$ to a rough SDE of the form \eqref{RSDE1}\footnote{Assuming regular $t$ dependence in $f$.}
        \begin{equation}                                          
        \label{RSDE2}                                             
        d X^{\mathbf{Y}}_t = b (t, X^{\mathbf{Y}}_t, Y_t) d t + \sigma (t, X^{\mathbf{Y}}_t, Y_t) d B_t (\omega) +
        (f, D_y f) (t, X^{\mathbf{Y}}_t, Y_t) d \mathbf{Y}_t,     
        \end{equation}                                            
        together with the ``rough stochastic'' Girsanov exponent  
        \[ I_t^{\mathbf{Y}} = \int_0^t (h,(D_x h f + D_y h)) (s, X^{\mathbf{Y}}_s, Y_s)
        d                                                         
        \mathbf{Y}_s - \frac{1}{2} \int_0^t | h (s, X^{\mathbf{Y}}_s, Y_s) |^2
        ds. \]                                                    
        Bearing exponential integrability, one defines a flow of measures
        \[ \langle \mu^{\mathbf{Y}}_t, \varphi \rangle            
        \assign                                                   
        \mathbb{E} [\varphi (X^{\mathbf{Y}}_t)                    
        \exp (I^{\mathbf{Y}}_t) ],                                
        \]                                                        
        whose randomisation $\bar{\mu}_t (\omega) \assign         
        \mu^{\mathbf{Y}}_t | \nobracket_{\mathbf{Y} = \mathbf{Y}^{\tmop{Ito}}
        (\omega)}$ should satisfy                                 
        \[ \langle \bar{\mu}_t (\omega), \varphi \rangle \assign  
        \mathbb{E} [\varphi (X _t) \exp (I _t) |                  
        \mathcal{F}^Y_t \nobracket] . \]                          
        The interest in this construction is a rough path factorisation of the
        Kallianpur-Striebel formula,                              
        \[ {\mathbb{E}^o} [\varphi (X _t) |                       
        \mathcal{F}^Y_t ] = \left. \frac{\langle \mu^{\mathbf{Y}}_t,
        \varphi \rangle}{\langle \mu^{\mathbf{Y}}_t, 1 \rangle}   
        \right|_{\mathbf{Y} = \mathbf{Y}^{\tmop{Ito}} (\omega)} . \]
        A suitable class of rough paths, for $0 \leqslant t \leqslant T$, is the
        (Polish) rough path space $\mathscr{C}^{0, 2 \alpha} ([0, T],
        \mathbb{R}^{d_Y})$. In fact, one can also work with (cf. Section \ref{sec:not} for notation) \ \
        \[ \mathscr{C}_T = \{ \mathbf{Y} \in \mathscr{C}^{0, 2 \alpha} ([0, T]) :
        [\mathbf{Y} ]_{s, t} = I_{d_Y\times d_Y} \times (t - s), 0 \leqslant s \leqslant t
        \leqslant T\} \]                                          
        to hard-wire the bracket of the observation (rough) path to be consistent with
        the quadratic variation of the observation process: $\langle Y \rangle_{s, t}
        = \langle B^{\perp} \rangle_{s, t} = I_{d_Y\times d_Y} \times (t - s)$. Equivalently, one can
        recast $(X, I)$ above in Stratonovich form w.r.t. $Y$; as was done in \cite{CDFO13} to study Clark's robustness problem.
        The appropriate                                           
        randomisation is then $\mathbf{Y} = \mathbf{Y}^{\tmop{Strato}} (\omega)$,
        so that one can take $\mathscr{C}_T = \mathscr{C}^{0, 2 \alpha}_g ([0, T],
        \mathbb{R}^{d_Y})$, the space of geometric rough paths. We mention that under natural assumptions, $(\mu^{\mathbf{Y}}_t)$ is the unique solution to an intrinsically defined rough Zakai equation \cite{BFS24x, BFLZ25p}.
        (Specialists may appreciate the fact that uniqueness results can be obtained by forward-backward arguments, without forward-backward filtration issues
        that arise in presence of $Y$-dependent coefficients in \eqref{equ:sig}; cf. \cite{CP24x} for a recent discussion.)

\subsection{Pathwise stochastic optimal control}

We follow many authors (Davis-Burstein,  Lions-Souganidis, Buckdahn-Ma, Rogers, ... ), leaving detailed references to \cite{FLZ24x},  in
investigating the random field
\begin{equation}
  \label{sVF} V (t, x ; \omega) : = \tmop{essinf}_{\eta (.)}
  \mathbb{E}^{t, x} \left( g \left( {X_T^{\eta}} 
  \right)  | \mathcal{F}_T^W  \right),
\end{equation}
where $X^{\eta}$ has controlled stochastic dynamics\footnote{As noted in \cite{DFG17}, one must not control the final coefficient field $f$
for otherwise the problem degenerates.}
\begin{equation}
  \label{cSDE1} dX^{\eta}_t = b (t, X^{\eta}_t ; \eta_t) dt + \sigma (t, X^{\eta}_t ; \eta_t) d B_t
  + f (t, X^{\eta}_t) d W_t .
\end{equation}
In some formal sense at least, the conditional dynamics of $X^{\eta}$, given some fixed
environmental noise $\{ W_t : 0 \leqslant t \leqslant T \}$, should be captured
by the solution $X^{\eta, \mathbf{Y}}$ to the controlled rough SDE\footnote{As explained in \eqref{RDE1},\eqref{RDE2}, having $f' \equiv 0$, amounts to a regular $t$ dependence in $f$ which is assumed here.}
\begin{equation}
  \label{RSDE3} dX^{\eta, \mathbf{Y}}_t = b (t, X^{\eta, \mathbf{Y}}_t ; \eta_t) dt + \sigma (t, X^{\eta, \mathbf{Y}}_t ; \eta_t) d B_t
  + (f, 0) (t, X^{\eta, \mathbf{Y}}_t) d \mathbf{Y}_t .
\end{equation}
With $\eta = \eta_t (\omega ; \mathbf{Y})$ these are RSDEs of the form \eqref{RSDE1}.
One reverts to a deterministic value function
\begin{equation}
  \label{rVF} V (s, x ; \mathbf{Y}) : = \inf_{\eta (.)} \mathbb{E}^{s, x}
  \left( g \left( {X_T^{\eta, \mathbf{Y}}}  \right)
  \right),
\end{equation}
with immediate regularity results (derived from estimates in \cite{FHL21x} for solutions of RSDEs), validity of a
dynamic programming principle, existence of $\varepsilon$-optimal controls $\eta = \eta_t (\omega ;
\mathbf{Y})$ etc. Rough HJB equations for \eqref{rVF} have been considered several authors; again referring to \cite{FLZ24x} for up-to-date references. (See also \cite{DFG17, HZ25p} for
a Pontryagin Maximum Principle view, relying on linear RSDE theory \cite{BCN24x}.)
 Generalising previous insights of \cite{BM07}, we saw that the randomised value function 
\[ \bar{V} (s, x, \omega) \assign V (s, x ; \mathbf{Y}) | 
   \nobracket_{\mathbf{Y} = \mathbf{W}^{\tmop{Ito}} (\omega)} \]
coincides with \eqref{sVF} as long as the controls in (\ref{cSDE1}) do not
anticipate the noise factor $B.$

The use of the RSDEs has a liberating effect on the modelling of the
conditional stochastic dynamics of (\ref{cSDE1}): applied with $\mathbf{Y} =
\{ \mathbf{W}_t^{\tmop{Ito}} (\omega) : 0 \leqslant t \leqslant T \}$ and
$\eta = \eta_t (\omega ; \mathbf{Y})$ the randomisation
\[ \bar{X}_t (\omega) \assign \left( {X_t^{\eta  (\omega ;
   \mathbf{Y}) ; \mathbf{Y}}}  (\omega) \right) |  \nobracket_{\mathbf{Y} =
   \mathbf{W}^{\tmop{Ito}} (\omega)} \]
is immediately (if subject to some measurability considerations) defined,
without resorting to any additional theory of ``anticipating'' generalisations
of (\ref{cSDE1}).

In fact, all results described remain valid if the Brownian statistics of $W$
in (\ref{cSDE1}) are replaced by a general rough path noise $\mathbf{W}
(\omega)$, confirming one's intuition that the Brownian nature of $W$ in
(\ref{cSDE1}) cannot be of much importance when it is looked upon
conditionally.

\subsection{Volatility modelling in finance} \label{sec:vmf}

A generic (possibly non-Markovian) \tmtextit{stochastic volatility }model,
built on two independent Brownians $(B, W)$, is of the form
\[ X_t = \int_0^t \sqrt{V_s (\omega)} \left( \sqrt{1 - \rho_s^2} d B_s +
   \rho_s d W_s \right) \]
with deterministic (often constant) correlation $\rho_t$ and a non-negative continuous
$(\mathcal{F}_t^W)$-adapted process $V_t (\omega)$. This generalizes the
Bachelier model (when $V_t$ is constant) and accommodates many classical
Markovian as well as recent non-Markovian stochastic volatility models, including (``Bachelier
variants'' of) Heston, Bergomi, Stein-Stein, rough Heston, rough Bergomi, etc.
An important generalisation are \tmtextit{local stochastic volatility} models. Leaving detailed references to \cite{BBFP25}, these are of form\footnote{From a financial modelling perspective, the right-hand side of \eqref{equ:LSV} covers ``local'' correlation, i.e. $\rho = \rho(t,X_t).$} 
\begin{equation} \label{equ:LSV}
d X_t = \ell (t, X_t) \sqrt{V_t (\omega)} \left( \sqrt{1 - \rho_t^2} d B_t
   + \rho_t d W_t \right) \backassign \sigma_t (X_t ; V_t (\omega)
   ) d B_t + f (t, X_t) d M_t \end{equation}
where we introduced the local martingale $M_t \assign \int_0^t \sqrt{V_s
(\omega)} d W_s$. For continuous $V$, we see that $[M] \in C^1$, with $V_t
(\omega) = \partial_t [M]_t .$ If one is interested in the conditional law of
$X_t, 0 \leqslant t \leqslant T_,$ given the information generated by $W$ up
to time $T$, one is naturally led to RSDEs of the form \eqref{RSDE1}, namely\footnote{We write again $(f,0)$ rather than $f$ to indicate regular $t$ dependence in $f$.}
\begin{equation}
  \label{rLSV} d X^{\mathbf{Y}}_t = \sigma_t (X^{\mathbf{Y}}_t ; \partial_t
  [\mathbf{Y}]_t) d B_t + (f,0) (t, X^{\mathbf{Y}}_t) d \mathbf{Y}_t,
\end{equation}
for a suitable class of rough paths like
\[ \mathscr{C}_T = \{ \mathbf{Y} \in \mathscr{C}^{0, 2 \alpha} ([0, T]) : t
   \mapsto [\mathbf{Y} ]_{0, t} \in C^1 ([0, T]) \}  . \]
Formally at least,
\[ \mathbb{E} (g (X_T^{t, x}) | \mathcal{F}_T^W \nobracket) =\mathbb{E}^{t,
   x} \left( g \left( {X^{\mathbf{Y}}_T}  \right) \right) |_{\mathbf{Y} =
   \tmmathbf{M}^{\tmop{Ito}}  (\omega)} \nobracket \]
which generalizes conditional option pricing formulas as seen in classical finance literature \cite{HW87, RT97}.

We remark that even if  the original dynamics \eqref{equ:LSV} are highly non-Markovian\footnote{E.g. in the popular situation when $V$ is modelling by a fractional Brownian motion with small Hurst parameter},
the roughpath-ified 
equation \eqref{rLSV} describes a Markov process, which leads then to a (rough) Feynman-Kac formalism that connects expression like
$\mathbb{E}^{t, x} g \left( {X^{\mathbf{Y}}_T}  \right) $ to
rough partial differential equations of Kolmogorov backward type
{\cite{BBFP25}} and {\cite{BFS24x}}.

\subsection{Mean-fields analysis of common noise system}
Let $B,W$ be two independent Brownian motion and consider\footnote{For simplicity of notation, assume autonomous coefficients $b, \sigma, f$, i.e. without explicit $t$-dependence. That said, the
dependence on $\mu_t (\omega)$, or $\mu^{\mathbf{Y}}_t$ below, automatically leads to a (non-regular) $t$-dependence, discussed below.} 
\begin{equation}
  \label{comMKV} d X_t = b (X _t, \mu_t (\omega)) {d t + \sigma
  }  (X _t, \mu_t (\omega)) d B_t + f  (X _t, \mu_t (\omega)) d W_t, \quad
  \mu_t (\omega) \assign \tmop{Law} (X_t | \mathcal{F}^W_t \nobracket).
\end{equation}
Such system arise naturally as the mean-field $(N \rightarrow \infty)$ limit
of interacting particle systems where each particle (or agent) has dynamics 
\begin{align}\label{eqn.Npart}
  {d X_t^{N, i}} = b \left( {{X_t^{N, i}}} ,
   \mu^N_t (\omega) \right) {d t + \sigma}  \left( {{X_t^{N,
   i}}} , \mu^N_t (\omega) \right) d B^i_t + f  \left( {{X_t^{N,
   i}}} , \mu^N_t (\omega) \right) d W_t \quad
\end{align}
with independent ${X_0^{N, i}} \sim X_0$, idiosyncratic Brownian
noises  ($B^i$)  and common noise $W$, all assumed independent. Here, $\mu^N_t
(\omega)$ is the empirical measure of $\left\{ {X_t^{N, i}} : 1
\leqslant i \leqslant N \right\}$.

Formally at least, the conditional dynamics of $X_t$ given some fixed
environmental noise $\{ W_t : 0 \leqslant t \leqslant T \}$ should be captured
by the solution \ $X^{\mathbf{Y}}$ to the mean-field RSDE
\begin{equation}
  \label{mfRSDE2} d X^{\mathbf{Y}}_t = b (X^{\mathbf{Y}}_t, \mu^{\mathbf{Y}}_t) {d t
  + \sigma}  (X^{\mathbf{Y}}_t, \mu^{\mathbf{Y}}_t) d B_t + f
  (X^{\mathbf{Y}}_t, \mu^{\mathbf{Y}}_t) d \mathbf{Y}_t, \quad
  \mu_t^{\mathbf{Y}} \assign \tmop{Law} (X^{\mathbf{Y}}_t).
  \end{equation}
  We then expect to see that
\[ \tmop{Law} (X^{\mathbf{Y} }_t) |_{\mathbf{Y} =
   \mathbf{W}^{\tmop{Ito}}} \nobracket = \tmop{Law} (X_t | W \nobracket) .
\]
Following \cite{BCD20, FHL25p} we can in this context utilize the Lions lifts of coefficient fields $\Theta(x,\mu)$, $\Theta \in \{b,\sigma,f\}$, which replace $\mu$ by a r.v. $\xi$ on some auxiliary moment space
$L_q (\Omega')$,
\[ \hat{\Theta} (x, \xi) \assign \Theta (x, \tmop{Law} (\xi)) . \]
Any solution to 
\eqref{mfRSDE2} is then understood as a rough It{\^o} process with, setting
$\mathcal{X}_t \assign (X^{\mathbf{Y}}_t (\omega), X^{\mathbf{Y}}_t (\cdot))$,
\begin{eqnarray*}
  F_t (\omega) & \assign & f (X^{\mathbf{Y}}_t (\omega), \mu_t) = \hat{f}_t
  (\mathcal{X}_t)\\
  F_t' (\omega) & : = & ((D_x  \hat{f}_t) \hat{f}_t) (\mathcal{X}_t) + (D_{\xi} 
  \hat{f}_t) (\mathcal{X}_t) [\hat{f}_t (\mathcal{X}_t)] 
   \equiv  ((D  \hat{f}_t) \hat{f}_t) (\mathcal{X}_t). 
\end{eqnarray*}
In the above, $D=(D_x,D_{\xi})$,  $D_x$ is the usual Fr\'echet derivative while $D_\xi$ is the Fr\'echet derivative along the subspace $L_p(\Omega')$ (with some $p\ge1$) of $L_q(\Omega')$  which contains the direction of $\xi$-derivative $\hat{f}(\mathcal{X}_t)$. 
In classical treatments, one often takes $p=q=2$, which rules out most coefficient fields due to their lack of higher differentiability on $L_2$, with examples given in \cite{CD18}. 
This issue is resolved in \cite{FHL25p} by choosing $p$ sufficiently large, hence, effectively enlarging the class of coefficient fields which are three-times Fr\'echet differentiable along $L_p$-directions  (recall that three orders of derivatives are sufficient to solve \eqref{mfRSDE2} as a rough SDE uniquely).
Remark that the results of \cite{CL14, BCD20} do not cover ours and conversely. (Specifically, \cite{BCD20} consider mean field RDEs driven by classes of random rough paths; their processes are not rough It{\^o} processes and there setup not designed to deal with common noise and conditional laws.) From \cite{CG19, CN21}, we can then expect that 
$\mu_t^{\mathbf{Y}} = \tmop{Law} (X^{\mathbf{Y}}_t)$ solves a non-linear, non-local rough partial
differential equations of Kolmogorov forward type, uniquely so under natural assumptions, as is shown in \cite{BFS25p}.



\section{Rough It{\^o} Processes} \label{sec:RIP}

\subsection{Notation and background} \label{sec:not}

{\bf Stochastic processes.} 
A filtered probability space is denoted by $(\Omega, \mathfrak{F},
(\mathfrak{F}_t)_{t \geq 0}, \mathbb{P})$; the filtration $(\mathfrak{F}_t) \subset \mathfrak{F}$
is \tmtextit{complete} if $\mathfrak{F}_0$ (and thus all ${\mathfrak{F}_t} $)
contains all $\mathbb{P}$-negligible sets and \tmtextit{right-continuous} if
$\mathfrak{F}_t = \bigcap_{s > t} \mathfrak{F}_s$ for all $t \geq 0$. 

A topological space $U$ is equipped with its \(\sigma\)-algebra of  Borel sets,
$\mathfrak{U}=\mathfrak{B} (U)$; if  $U$ Polish (complete, separable, metric space)
call $(U, \mathfrak{U}) = (U, \mathfrak{B}(U))$ a
\tmtextit{Polish measurable space}. Given generic measurable spaces $(M_i,
\mathfrak{M}_i), i = 1, 2$, we say $f : M_1 \to M_2$ is $\mathfrak{M}_1
/\mathfrak{M}_2$-measurable if $f^{- 1} (\mathfrak{M}_2) \subset
\mathfrak{M}_1$. A process $X = (X_t)_{t \geq 0}$ with values in a measurable
space $(M, \mathfrak{M})$ is said to be \tmtextit{measurable} if the mapping
$(\omega, t) \mapsto X_t (\omega)$ defined on $\Omega \times \mathbb{R}_+$
equipped with the product $\sigma$-field $\mathfrak{F} \otimes \mathfrak{B}
(\mathbb{R}_+)$ is measurable, \tmtextit{adapted} if $X_t$ is
$\mathfrak{F}_t$-measurable, for all $t \geq 0$, and \tmtextit{progressively
measurable} (or  \tmtextit{progressive}) if for each $t$, the mapping $(s, \omega) \mapsto X_s (\omega)$ on
$[0, t] \times \Omega$ is $(\mathfrak{B}_t \otimes
\mathfrak{F}_t)$-measurable, were $\mathfrak{B}_t=\mathfrak{B}([0, t])$. For (say, $\mathbb{R}^d$-dimensional) processes,  the 
 \tmtextit{optional} $\sigma$-field
$\mathfrak{O}$ is the smallest $\sigma$-field making all adapted,
c{\`a}dl{\`a}g processes measurable, the \tmtextit{progressive} $\sigma$-field
$\Pi$ is the smallest $\sigma$-field making all progressively
measurable processes measurable. 
Recall $\mathfrak{O}  \subset \Pi$. Also, we will write  $\mathfrak{O}_T,   \Pi_T$ if only interested on a finite horizon $[0,T]$. 
We will be interested in processes depending on a parameter. Given a measurable space $(U, \mathfrak{U})$, we say that a process $X$ on $U
\times \mathbb{R}_+$ is $\mathfrak{U}$-progressive if it is $(\mathfrak{U} \otimes \Pi)$-measurable as function of $(u,t,\omega)$. Replacing $\Pi$
by $\mathfrak{O}$ leads to the notation of  $\mathfrak{U}$-optional processes. (Similarly, one can define $\mathfrak{U}$-predictable processes, of use in presence of jumps \cite{AP24x}.)
\medskip 

\noindent {\bf Measurable selection.}  It is known (see e.g. \cite[Thm 18.25]{Kal21}) that the stochastic integral of a $\mathfrak{U}$-progressive process against a continuous semimartingale admits a  $\mathfrak{U}$-progressive version, which is a.s. continuous at each $u$. Similar results for $\mathfrak{U}$-optional (resp. $\mathfrak{U}$-predictable) integrands are due to Stricker-Yor, cf.  \cite[appendix]{FLZ24x} for an (English) summary and precise references.

%
%

\medskip 

\noindent {\bf Rough paths.} \cite{Lyo98,FH20}. We consider a class of level-2 rough path over $\mathbb{R}^{d_Y}$, say
$\mathbf{Y} = (Y, \mathbb{Y}) \in (\mathscr{C}, \rho)$ where $(\mathscr{C}, \rho)$  
is a complete and metric space of rough paths, such that $(\mathscr{C}, \rho) \hookrightarrow (\mathscr{C}^{\alpha} ,
\rho_{\alpha})$, the usual $\alpha$-H\"older rough path space\footnote{ $\rho_{\alpha} (\mathbf{Y}, \overline{\mathbf{Y}}) = |
  \delta Y - \delta \bar{Y} |_{\alpha} + |\mathbb{Y}- \bar{\mathbb{Y}} |_{2
  \alpha}$. Recall also the homogeneous rough path norm 
$ \interleave \mathbf{Y} \interleave_\alpha = | \delta Y|_{\alpha} \vee
   \sqrt{|\mathbb{Y}|_{2 \alpha}}$. }, for some fixed
$\alpha \in (1 / 3, 1 / 2]$. We write $\mathscr{C}_T$ and
$\mathscr{C}^{\alpha}_T$ respectively when we want to  emphasize the time-horizon
$[0, T]$, the notation $\mathscr{C}^{\alpha}  ([0, T], \mathbb{R}^{d_Y})$ will also be used. \ For $\mathbf{Y} \in \mathscr{C}^\alpha$, the bracket
$[\mathbf{Y}]$ is a $2 \alpha$-H\"older path defined by
\[ (\delta Y) \otimes (\delta Y) =\mathbb{Y}+\mathbb{Y}^{\top} + \delta
   [\mathbf{Y}]. \]
We set  $\mathbf{Y}^{\circ} = (Y, \mathbb{Y}^{\circ}) \assign \mathbf{Y} + (0, \delta [\mathbf{Y}] / 2) = 
(Y, \mathbb{Y} + \delta [\mathbf{Y}] / 2)$, noting $[\mathbf{Y^{\circ}}]=0$. Here are some examples of rough path spaces we have in mind, notation along \cite{FH20}.  
\begin{itemize}
  \item $(\mathscr{C}^{\alpha}, \rho_{\alpha})$, the complete (non-separable) space of $\alpha$-H\"older rough paths,
  
  \item $(\mathscr{C}^{\alpha}_g, \rho_{\alpha})$, with $\mathscr{C}^{\alpha}_g \assign \{\mathbf{Y} \in
  \mathscr{C}^{\alpha}: [\mathbf{Y}]\equiv 0 \}$, the complete (non-separable) space of \tmtextit{weakly geometric} $\alpha$-H\"older rough
  paths,
  
  \item $(\mathscr{C}^{0, \alpha}, \rho_{\alpha})$, the Polish space of
  $\alpha$-H\"older rough paths obtained as $\rho_\alpha$-closure of
  smooth rough path (cf. Exercise 2.8 in \cite{FH20}),
  
  \item $(\mathscr{C}^{0,\alpha}_g, \rho_{\alpha})$, the Polish space
  of\tmtextit{ geometric} $\alpha$-H\"older rough paths obtained as $\rho_\alpha$-closure of
  canonically lifted smooth path,

  \item  $(\mathscr{C}^{\alpha, 1}, \rho_{\alpha, 1})$, the complete (non-separable) space of $\alpha$-H\"older rough paths with Lipschitz bracket,
  $\mathscr{C}^{\alpha, 1} \assign \{\mathbf{Y} \in
  \mathscr{C}^{\alpha}: | [\mathbf{Y}] ] |_{\tmop{Lip}} =
 \sup_{s<t} \tfrac{|\delta [ \mathbf{Y}]_{s,t}]|}{|t-s|}  < \infty  \}$ with\footnote{{   Recall $\mathbf{Y} = \mathbf{Y}^\circ - (0, [\mathbf{Y}]/2)$ so 
  that $\rho_\alpha $ and $\rho_{\alpha,1}$ agree on weakly geometric rough paths.}}
$$  \rho_{\alpha, 1} (\mathbf{Y}, \tmmathbf{Z}) \assign \rho_{\alpha}
  (\mathbf{Y}^{\circ}, \tmmathbf{Z}^{\circ}) + | [\mathbf{Y}] -
  [\tmmathbf{Z}] |_{\tmop{Lip}},$$
  
  \item $(\mathscr{C}^{0, \alpha, 1}, \rho_{\alpha, 1})$, the Polish space
  of $\alpha$-H\"older rough paths with continuously differentiable bracket,
  with $\mathscr{C}^{0, \alpha, 1} = \mathscr{} \{\mathbf{Y} \in
  \mathscr{C}^{0, \alpha}: \| [ \mathbf{Y} ] \|_{C^1} < \infty\} $ and metric $\rho_{\alpha, 1}$.
\end{itemize}

\subsection{Measurability of rough It{\^o} processes in (rough path) parameter}

Recall that $\mathscr{C }_T$ denotes a (Polish) space of $(\alpha$-H\"older) rough paths on $[0,
T]$, possibly non-geometric, $\alpha \in (1/3,1/2]$. (The case of one fixed rough path amounts to
taking $\mathscr{C }_T$ as a singleton.) Let $\mathfrak{C}_T$ denote the Borel
sets of $\mathscr{C }_T$. We work on a complete filtered probability space  $\tmmathbf{\Omega} = (\Omega, \mathfrak{F}, (\mathfrak{F}_t)_t,
\mathbb{P})$.

\begin{definition}
  A {\em rough It{\^o} processes} $X$ on $\tmmathbf{\Omega}$ over $\mathscr{C
  }_T$  is a family of continuous adapted processes $X = \{
  X^{\mathbf{Y}}  : \mathbf{Y} \in \mathscr{C }_T \} $, defined on the
  same (filtered) stochastic basis, such that, for each $\mathbf{Y} \in
  \mathscr{C }_T$,
  \begin{equation}
    \label{RIP2} {X^{\mathbf{Y}}_t}  (\omega) {= X^{\mathbf{Y}}_0} 
    (\omega) + \int_0^t A_s (\omega, \mathbf{Y}) d s + \int_0^t \Sigma_s
    (\omega, \mathbf{Y}) d B_s (\omega) + \int_0^t (F_s, F'_s) (\omega,
    \mathbf{Y}) d \mathbf{Y}_s, \quad 0 \leqslant t \leqslant T,
  \end{equation} 
  in a.s. sense, with coefficients fields $\{ A, \Sigma, (F, F') \}$ as
  described below.
\end{definition}
In what follows we write indifferently $\Theta^{\mathbf{Y}}_t (\omega) =
\Theta_t (\omega ; \mathbf{Y})$ for any coefficient field; similar notation
for ${X^{\mathbf{Y}}_0}  (\omega) {= X_0}  (\omega ; \mathbf{Y})$.

\begin{assumption}: \label{ass:1}
For each $\mathbf{Y} \in \mathscr{C }_T$,
\begin{itemizeminus}
  \item all coefficients fields are progressive; i.e. each $(t, \omega) \mapsto
  \Theta^{\mathbf{Y}}_t (\omega)$ is progressively measurable;
  
  \item $A^{\mathbf{Y}}, \left( \Sigma  {\Sigma^T}  \right)^{\mathbf{Y}}
  \in L^1 ([0, T])$ for a.e. $\omega$;
  \item stochastic controlledness of $(F,F')^{\mathbf{Y}}$ in the quantitative form%
  \footnote{{ As in \cite{FHL21x}, we write $\| F \|_{\alpha ; p} = \sup_t \| F_t \|_p + \sup_{s < t} \frac{\| F_t - F_s
\|_p}{| t - s |^{2 \alpha}}$, $\| \mathbb{E}_{\bullet} R^Z \|_{2 \alpha, p} =
\sup_{s < t} \frac{\| \mathbb{E}_s R^Z_{s, t} \|_p}{| t - s |^{2 \alpha}}$. By Thm 3.5 and Cor 3.6 in \cite{FHL21x} condition \eqref{fatnorm} guarantees that $\int (F,F') (Y) d \mathbf{Y}$ is well-defined as limit of partially compensated Riemann sums, in $p$.th mean, by application of (some variant of) the stochastic sewing lemma. We note that \cite{FHL21x} offers more flexibility in choosing H\"older exponents, as well as ``mixed'' integrability exponents, which plays an essential role in the well-posedness theory for RSDEs.} 
  }
    
  
{ 
\begin{equation} \label{fatnorm} 
 \fatnorm{ (F, F')^{\mathbf{Y}} } := \| F \|_{\alpha ; p} + \| F' \|_{\alpha, p} + \| \mathbb{E}_{\bullet} R^F
\|_{2 \alpha ; p} < \infty,
\end{equation}
}
where $F_t (\omega ; \mathbf{Y}) - F_s (\omega ; \mathbf{Y}) = F_s' (\omega
     ; \mathbf{Y}) (Y_t - Y_s) + R_{s, t}^F$. 
     
  \end{itemizeminus}
\end{assumption} 
In many situations one has additional structures which we formalize as follows.

\begin{assumption}: \label{ass:2} Is divided into {\bf (A2.opt)} and {\bf (A2.prog)} as follows. 
All $\Theta$ are $\mathfrak{C}_T$-optional (resp. $\mathfrak{C}_T$-progressive) in the sense of
$(\mathfrak{C}_T \otimes \mathfrak{O}_T)$-measurability (resp. $(\mathfrak{C}_T \otimes \Pi_T)$-measurability) of the map
\[ (\mathbf{Y} ; (t, \omega)) \mapsto \Theta_t (\omega ; \mathbf{Y}). \]
Also, $(\mathbf{Y} ; \omega) \mapsto X_0 (\omega ; \mathbf{Y})$ is
$(\mathfrak{C}_T \otimes \mathfrak{F}_0)$-measurable.\\
\end{assumption} 

\begin{assumption}: \label{ass:3}  For each $\mathbf{Y} \in \mathscr{C}_T$, the coefficient fields are
\tmtextit{causal} in $\mathbf{Y}$. That is, for a.e. $\omega$,
\[ \Theta_t (\omega ; \mathbf{Y}) = \Theta_t (\omega ; \mathbf{Y }^t),
   \qquad \mathbf{Y}^t_s \assign \mathbf{Y}_{s \wedge t} \]
and, similarly, $X_0 (\omega ; \mathbf{Y}) = X_0 (\omega ; \mathbf{Y
}_0)$.
\end{assumption} 

\begin{theorem}
  Assume \eqref{ass:1}. Then the right-hand side of \eqref{RIP2} is well-defined, and
  thus defines a rough It{\^o} process.
\end{theorem}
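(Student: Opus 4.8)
The plan is to verify, term by term, that each of the three integrals on the right-hand side of \eqref{RIP2} is well-defined for every fixed $\mathbf{Y} \in \mathscr{C}_T$, so that their sum defines a continuous adapted process which is the rough It\^o process in question. Throughout, $\mathbf{Y}$ is frozen, so all the difficulties are those already present in the one-fixed-rough-path theory of \cite{FHL21x}; Assumption \eqref{ass:1} is precisely tailored to invoke that theory.

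First I would treat the drift term $\int_0^t A_s(\omega,\mathbf{Y})\,ds$. By \eqref{ass:1}, $s \mapsto A_s^{\mathbf{Y}}(\omega)$ is progressively measurable and lies in $L^1([0,T])$ for a.e.\ $\omega$; hence the Lebesgue integral exists for a.e.\ $\omega$, and $t \mapsto \int_0^t A_s^{\mathbf{Y}}\,ds$ is absolutely continuous, in particular continuous, and adapted (being the integral of a progressively measurable process). Next, the It\^o term $\int_0^t \Sigma_s(\omega,\mathbf{Y})\,dB_s$: again by \eqref{ass:1}, $\Sigma^{\mathbf{Y}}$ is progressively measurable and $\int_0^T (\Sigma\Sigma^T)_s^{\mathbf{Y}}\,ds < \infty$ a.s., which is exactly the condition for the It\^o stochastic integral against Brownian motion $B$ to be defined as a continuous local martingale; it is adapted by construction. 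So far this is standard and routine.

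The substantive term is the rough integral $\int_0^t (F_s,F'_s)(\omega,\mathbf{Y})\,d\mathbf{Y}_s$. Here I would invoke the stochastic sewing construction from \cite[Thm~3.5]{FHL21x}: under the stochastic controlledness hypothesis $\fatnorm{(F,F')^{\mathbf{Y}}} < \infty$ of \eqref{ass:1}, together with the Hölder regularity of $\mathbf{Y} \in \mathscr{C}^\alpha$ and the standing assumption $\alpha p > 1$, the compensated Riemann sums $\sum_{[u,v]} \big( F_u^{\mathbf{Y}}(\delta Y)_{u,v} + (F')_u^{\mathbf{Y}}\mathbb{Y}_{u,v}\big)$ converge in $L^p$ as the mesh tends to zero, and the limit has a continuous adapted version; this is the content of the stochastic sewing lemma applied to the germ $\Xi_{u,v} = F_u^{\mathbf{Y}}(\delta Y)_{u,v} + (F')_u^{\mathbf{Y}}\mathbb{Y}_{u,v}$, whose required bounds on $\mathbb{E}_u[\delta\Xi]$ and on the second moments follow from the controlledness norm being finite. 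Continuity in $t$ and adaptedness are part of the conclusion of that theorem, so I would simply cite it rather than re-prove the sewing estimates.

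The main obstacle — to the extent there is one — is purely bookkeeping: one must check that the three notions of integral (Lebesgue, It\^o, rough) are each legitimately applicable under the precise measurability and integrability conditions of \eqref{ass:1}, and that the resulting sum $X^{\mathbf{Y}}_0 + (\text{drift}) + (\text{It\^o}) + (\text{rough})$ inherits continuity and adaptedness from each summand, so that the family $\{X^{\mathbf{Y}} : \mathbf{Y} \in \mathscr{C}_T\}$ meets the definition of a rough It\^o process. No measurable selection or joint-in-$\mathbf{Y}$ argument is needed here, since $\mathbf{Y}$ is fixed; those considerations (Assumptions \eqref{ass:2}, \eqref{ass:3}) enter only in the subsequent randomisation results. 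In short, the proof is essentially a citation of \cite[Thm~3.5]{FHL21x} for the rough term plus elementary facts for the other two, assembled under \eqref{ass:1}.
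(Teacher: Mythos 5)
Your proposal is correct and follows essentially the same route as the paper: the Lebesgue and It\^o integrals are handled by the classical integrability/measurability conditions in \eqref{ass:1}, and the rough stochastic integral is obtained by citing \cite[Theorem 3.5]{FHL21x}, exactly as the paper does. The extra detail you give on the stochastic sewing germ is a harmless elaboration of the same citation.
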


\begin{proof} %
  Under the conditions put forward in \eqref{ass:1}, the first and second integral are
  well-defined in Lebesgue and It{\^o} sense, respectively, and define 
  continuous adapted processes. The same is true for the final (``stochastic
  rough'') integral, thanks to \eqref{fatnorm} and \cite[Theorem 3.5]{FHL21x}.
\end{proof}

{  \begin{remark} \label{rem:A1forRSDE} (i) Condition \eqref{fatnorm} implies, by Thm 3.5 and  Cor. 3.6 in \cite{FHL21x} the estimate
\begin{equation} \label{equ:RILp} \mathbb{E} \left( \left| \int_s^t (F_r, F'_r) (\omega, \mathbf{Y})
   d\mathbf{Y}_r \right|^p \right) \leqslant C_R \ 
   \fatnorm{ (F, F')^{\mathbf{Y}} }^{\,p}
    \times |t - s|^{\alpha p} 
    \end{equation}
    whenever $\tmmathbf{\interleave Y \interleave}_{\alpha} \leqslant R$, with polynomially growing $C_R$. 
    
    \noindent (ii) Assumption (A1) applies in particular to solutions of rough differential
equations: by the apriori estimates of Proposition 4.5 in \cite{FHL21x}, for $L_{m,\infty}$-solutions, applied with $\beta = \beta' = \alpha$ for simplicity, the integrand of
the rough stochastic integral appearing in the integral formulation of an RSDEs, has finite 
 seminorm
$\|F, F' \|_{Y, 2 \alpha ; m, \infty} = \| \delta F\|_{{\alpha} ; m, \infty} + \|F' \|_{\alpha; m, \infty} +
   \|E_\bullet R^F \|_{2 \alpha ; \infty} $. (We refer to \cite{FHL21x} for details on such mixed integrability space, pivotal for RSDE well-posedness theory.)  Applied with $m=p$, with boundedness of $F$ in the RSDE context, this dominates $\fatnorm{ (F, F') }$ as defined in \eqref{fatnorm}. 
\end{remark} 
}

\begin{theorem}\label{thm:jm}
  (i). (Joint measurability) Assume \eqref{ass:1}+\eqref{ass:2} and write $X = X_t (\omega,
  \mathbf{Y})$ for rough It{\^o} process of the previous theorem. Then there is
  a $\mathfrak{C}_T$-optional (resp. $\mathfrak{C}_T$-progressive) version of $(t, \omega, \mathbf{Y}) \mapsto
  X^{\mathbf{Y}}_t (\omega)$, denoted by the same letter in the sequel.
  
  (ii). (Causality) Assume \eqref{ass:1}+\eqref{ass:3}. Then $X$ is also causal in $\mathbf{Y}$.
   \end{theorem}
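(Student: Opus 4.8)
The plan is to prove (i) by treating the four summands on the right of \eqref{RIP2} separately, using that under \eqref{ass:2} each of $X_0,A,\Sigma,(F,F')$ is $\mathfrak{C}_T$-optional (resp.\ $\mathfrak{C}_T$-progressive) and that the $\sigma$-fields $\mathfrak{O}_T\subset\Pi_T$ are stable under countable limits. The term $X_0(\omega;\mathbf{Y})$ is constant in $t$ and $(\mathfrak{C}_T\otimes\mathfrak{F}_0)$-measurable, hence $\mathfrak{C}_T$-optional and $\mathfrak{C}_T$-progressive. For the drift, $(\mathbf{Y},t,\omega)\mapsto\int_0^tA_s(\omega;\mathbf{Y})\,ds$ is jointly measurable by Fubini and, for each $\mathbf{Y}$, continuous in $t$ and adapted; approximating it by the step processes $\sum_k\mathbf{1}_{[t_k,t_{k+1})}(t)\int_0^{t_k}A_s\,ds$ along a dyadic mesh $\{t_k\}$ (which are $\mathfrak{C}_T$-optional, resp.\ $\mathfrak{C}_T$-progressive, and converge pointwise by continuity) gives the claim. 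For the It\^o term, $\Sigma$ is in particular $\mathfrak{C}_T$-progressive (as $\mathfrak{O}\subset\Pi$), so $\int_0^\cdot\Sigma_s(\cdot;\mathbf{Y})\,dB_s$ admits, by \cite[Thm.\ 18.25]{Kal21}, a $\mathfrak{C}_T$-progressive version that is a.s.\ continuous at each $\mathbf{Y}$; continuity in $t$ then also makes it $\mathfrak{C}_T$-optional (or one invokes the Stricker--Yor result quoted above for the optional statement directly).

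The substantive point is the rough stochastic integral $I^{\mathbf{Y}}_t\assign\int_0^t(F_s,F'_s)(\omega;\mathbf{Y})\,d\mathbf{Y}_s$. Fix dyadic partitions $\pi_n$ of $[0,T]$ and let $I^{\pi_n}_t(\omega,\mathbf{Y})$ be the associated (partially compensated) Riemann sums, which involve only the values $F_{t_i},F'_{t_i}$ at mesh points $t_i\le t$, the increments $(\delta Y)_{t_i,t_{i+1}\wedge t}$, $\mathbb{Y}_{t_i\wedge t,t_{i+1}\wedge t}$, and conditional expectations at the $t_i$. Since $F,F'$ are $\mathfrak{C}_T$-optional, resp.\ $\mathfrak{C}_T$-progressive, by \eqref{ass:2}, the maps $\mathbf{Y}\mapsto(\delta Y)_{u,v},\mathbb{Y}_{u,v}$ are continuous (hence Borel) on $\mathscr{C}_T\hookrightarrow\mathscr{C}^\alpha_T$, and conditioning preserves joint measurability in $\mathbf{Y}$, each $I^{\pi_n}$ is jointly measurable, continuous in $t$ and adapted, hence $\mathfrak{C}_T$-optional, resp.\ $\mathfrak{C}_T$-progressive. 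By \cite[Thm.\ 3.5]{FHL21x}, for each $\mathbf{Y}$ one has $I^{\pi_n}_\cdot(\cdot,\mathbf{Y})\to I^{\mathbf{Y}}_\cdot$ in $L^p(\Omega;C([0,T]))$, with a quantitative rate $\|I^{\pi_n}(\cdot,\mathbf{Y})-I^{\mathbf{Y}}\|_{L^p(\Omega;C([0,T]))}\le C(\mathbf{Y})\,2^{-n\gamma}$, where $\gamma=\gamma(\alpha,p)>0$ and $C(\mathbf{Y})<\infty$ under \eqref{ass:1} (it depends on $\interleave\mathbf{Y}\interleave_\alpha$ and $\fatnorm{(F,F')^{\mathbf{Y}}}$). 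The crucial upgrade is that this $L^p$-rate along a \emph{fixed} sequence forces a.s.\ convergence along the same sequence: by Minkowski's inequality $\sum_n\|I^{\pi_n}(\cdot,\mathbf{Y})-I^{\pi_{n-1}}(\cdot,\mathbf{Y})\|_{L^p(\Omega;C([0,T]))}<\infty$, hence $\sum_n\|I^{\pi_n}_\cdot(\omega,\mathbf{Y})-I^{\pi_{n-1}}_\cdot(\omega,\mathbf{Y})\|_{C([0,T])}<\infty$ for a.e.\ $\omega$, so $(I^{\pi_n}_\cdot(\omega,\mathbf{Y}))_n$ is Cauchy in $C([0,T])$ for a.e.\ $\omega$, with limit necessarily $I^{\mathbf{Y}}_\cdot(\omega)$. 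Consequently $\hat I_t(\omega,\mathbf{Y})\assign\lim_nI^{\pi_n}_t(\omega,\mathbf{Y})$ on the measurable set where this limit exists (and $0$ otherwise) is jointly measurable, $\mathfrak{C}_T$-optional, resp.\ $\mathfrak{C}_T$-progressive (being a $\limsup$ of such on a measurable set), and for each $\mathbf{Y}$ indistinguishable from $I^{\mathbf{Y}}$. Summing the four terms yields the required version of $X$, which proves (i). The obstacle one anticipates is exactly this: an $L^p$-limit of jointly measurable processes need not admit a jointly measurable version when the a.s.-convergent subsequence depends on $\mathbf{Y}$; the quantitative sewing estimate is what removes that dependence.

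For (ii), fix $\mathbf{Y}\in\mathscr{C}_T$ and $t\in[0,T]$, and note that $\mathbf{Y}$ and $\mathbf{Y}^t$ restrict to the same rough path on $[0,t]$ and that $(\mathbf{Y}^t)^s=\mathbf{Y}^s$ for $s\le t$. By \eqref{ass:3}, for a.e.\ $\omega$ and every $s\le t$,
\[ \Theta_s(\omega;\mathbf{Y})=\Theta_s(\omega;\mathbf{Y}^s)=\Theta_s\big(\omega;(\mathbf{Y}^t)^s\big)=\Theta_s(\omega;\mathbf{Y}^t),\qquad\Theta\in\{A,\Sigma,F,F'\}, \]
and likewise $X_0(\omega;\mathbf{Y})=X_0(\omega;\mathbf{Y}_0)=X_0(\omega;\mathbf{Y}^t)$. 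Substituting into \eqref{RIP2}, the Lebesgue and It\^o integrals over $[0,t]$ are a.s.\ unchanged when $\mathbf{Y}$ is replaced by $\mathbf{Y}^t$ (the integrands agree $ds\otimes d\mathbb{P}$-a.e.), and $\int_0^t(F,F')^{\mathbf{Y}}\,d\mathbf{Y}=\int_0^t(F,F')^{\mathbf{Y}^t}\,d\mathbf{Y}^t$ a.s., since the latter is the $L^p$-limit of Riemann sums over partitions of $[0,t]$ built solely from the integrand and integrator restricted to $[0,t]$, which coincide pathwise with those for $\mathbf{Y}$. Hence $X_t^{\mathbf{Y}}=X_t^{\mathbf{Y}^t}$ a.s.\ for each $t$. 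Moreover the rate $C(\mathbf{Y}^t)$ above is dominated by $C(\mathbf{Y})$ uniformly in $t$ (the relevant norms of $\mathbf{Y}^t$ and $(F,F')^{\mathbf{Y}^t}$ on $[0,t]$ are controlled by those of $\mathbf{Y}$ and $(F,F')^{\mathbf{Y}}$ on $[0,T]$), so the Minkowski argument of (i) runs uniformly in $t$ and the identity in fact holds a.s.\ simultaneously for all $t\in[0,T]$; that is, $X$ is causal in $\mathbf{Y}$.
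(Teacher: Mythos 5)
Your proof is correct, and the overall decomposition (initial condition, Lebesgue, It\^o and rough stochastic integral, each inheriting joint measurability separately, with the rough integral as the crux) is the same as the paper's; part (ii) is a correct elaboration of what the paper dismisses in one line. Where you genuinely diverge is in how you upgrade convergence of the compensated Riemann sums to joint measurability of the limit. The paper delegates this to a measurable-selection lemma (\cite{FLZ24x}, Cor.~4.7 for the optional case, resp.\ \cite{HZ25p}, Prop.~3.11 for the progressive case): there only convergence \emph{uniformly in time in probability} is used, and the obstacle you correctly identify --- that the a.s.-convergent subsequence may depend on $\mathbf{Y}$ --- is resolved by choosing the subsequence indices $n_k^{\mathbf{Y}}$ measurably in the parameter, so that $(t,\omega,\mathbf{Y})\mapsto I^{\pi_{n_k^{\mathbf{Y}}}}_t(\omega,\mathbf{Y})$ remains jointly measurable. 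You instead extract a quantitative rate from the sewing estimates so that the \emph{fixed} dyadic sequence converges a.s.\ in $C([0,T])$, removing the need for any subsequence extraction; this is more self-contained, but it leans on a uniform-in-time $L^p$ rate $\lesssim 2^{-n\gamma}$ that is strictly stronger than the fixed-time estimate the stochastic sewing lemma gives directly --- one needs $\alpha p>1$ and a union bound over the $O(2^n)$ partition points (or a Kolmogorov/H\"older-type interpolation between mesh points) to pass from fixed $t$ to the supremum over $t$. You should make that intermediate step explicit rather than attribute the $C([0,T])$-valued rate verbatim to \cite[Thm.~3.5]{FHL21x}; with that caveat the argument is sound and is a legitimate alternative to the measurable-subsequence route.
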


\begin{proof}
 (i) By assumption, $A$ and $\Sigma$ are
  $\mathfrak{C}_T$-optional (resp. $\mathfrak{C}_T$-progressive). The desired measurability 
  of the Lebesgue and stochastic integral then follows from results on measurable selection, more
  precisely Lemma 8.5 and Proposition 8.11 of \cite{FLZ24x} under assumption (A2.opt), and \cite[Thm 18.25]{Kal21} in case of (A2.prog).
   
  It remains to understand that the rough stochastic integral as a map
  \[ (t, \omega, \mathbf{Y}) \mapsto \int_0^t (F_s, F'_s) (\omega,
     \mathbf{Y}) d \mathbf{Y}_s \]
admits a $\mathfrak{C}_T$-optional (resp. $\mathfrak{C}_T$-progressive) version. By assumption, $(F, F' )$ is
  $\mathfrak{C}_T$-optional, the existence of a \ $\mathfrak{C}_T$-optionally measurable version of the rough stochastic integral then follows by measurable selection,  
cf. {\cite{FLZ24x}}, Corollary 4.7 for the optional case; similarly, \cite[Proposition 3.11]{HZ25p} for the progressive case. 
  
  (ii) Straight-forward consequence of the definitions of the integrals that
  define $X$.
\end{proof}

\begin{remark} Rough BSDEs \cite{DF12, LT24} provide a natural class of rough It\^o processes for which the causality condition \eqref{ass:3} is not met.\footnote{One can define a notion of backward causality, 
but that would go too far astray here.}
\end{remark}

\subsection{Randomisation and regular conditional distributions}

Let $\tmmathbf{\Omega}' = (\Omega', \mathfrak{G}', (\mathfrak{F}'_t)_t,
\mathbb{P}')$ be a complete probability space, and similar for
$\tmmathbf{\Omega}''$. Write
\[ \tmmathbf{\Omega} = (\Omega, \mathfrak{F}, (\mathfrak{F}_t)_t, \mathbb{P})
\]
for the (completed) product space, where $\Omega = \Omega' \times \Omega''$,
$\mathbb{P}=\mathbb{P}' \otimes \mathbb{P}''$ and $\mathfrak{F}_t
=(\mathfrak{F}'_t \otimes \mathfrak{F}''_t) \vee \mathfrak{N}$ where
$\mathfrak{N}$ denotes the $\mathbb{P}$-negligible sets on $\Omega$. Write
$\omega = (\omega', \omega'')$ accordingly.

Let now $X = \{ X^{\mathbf{Y}}  : \mathbf{Y} \in \mathscr{C}_T \}$ denote
a rough It{\^o} process over $\tmmathbf{\Omega}'$. We further make assumption

\begin{assumption} \label{ass:4}: Assume  $\mathscr{C}_T$ Polish, $(\mathscr{C}_T, \rho) \hookrightarrow (\mathscr{C}_T^{\alpha} ,
\rho_{\alpha})$, $\alpha \in (1/3,1/2)$, cf. Section \ref{sec:not}, and 
\[ \mathbf{W} : (\Omega'', \mathfrak{F}_T'', \mathbb{P}'') \rightarrow
   (\mathscr{C}_T, \mathfrak{C}_T), \]
adapted in the sense that $\mathbf{W}_t \in \mathfrak{F}''_t$. (In other
words, $\mathfrak{F}^{\mathbf{W}}_t \subset \mathfrak{F}_t''$.)\\ 
\end{assumption}

Both can be
``lifted'' to ${\Omega}$, by setting $X^{\mathbf{Y}}  (\omega) \assign
X^{\mathbf{Y}}  (\omega')$ and $\mathbf{W} (\omega) : = \mathbf{W}
(\omega'')$, respectively, where they are, by construction, independent under
$\mathbb{P}$. We also lift $\mathfrak{F}'_t$ (and similarly
$\mathfrak{F}''_t$) in the natural way, in the sense that
\[ A' \in \mathfrak{F}'_t \rightsquigarrow A' \times \Omega'' \in
   \mathfrak{F}'_t \otimes \mathfrak{F}''_0 . \]
We know from Theorem \ref{thm:jm} above that under the appropriate conditions,
a rough It{\^o} process $X = \{ X^{\mathbf{Y}}  : \mathbf{Y} \in \mathscr{C}_T
\}$ over $\tmmathbf{\Omega}'$ admits a $\mathfrak{C}_T$-optional (resp. $\mathfrak{C}_T$-progressive) version, as before coefficient fields $\{ A, \Sigma, (F, F') \}$ denote its coefficient
fields.
%
%
%


\begin{theorem}\label{thm.randomise}
  \label{thm:rcd}Assume \eqref{ass:1}$+$\eqref{ass:2}$+$\eqref{ass:4}.
  
 \noindent (i) Then the randomised rough It{\^o} process 
  \[ \label{randomised-rip} {\bar{X}_t}   (\omega) \assign X_t^{\mathbf{W}
   (\omega'')} (\omega') \assign X_t^{\mathbf{Y}} |_{\mathbf{Y} = \mathbf{W}
   (\omega'')}  \]
   defines a $(\mathfrak{F}'_t \vee \mathfrak{F}''_T : 0
  \leqslant t \leqslant T) $-adapted process.

 \noindent (ii) For any $0 \leqslant t \leqslant T$,
  \[ \label{rcd1} \tmop{Law} (\bar{X}_t | \mathfrak{F}''_T) (\omega) =
     \tmop{Law} (X^{\mathbf{Y}}_t) |_{\mathbf{Y} = \mathbf{W} (\omega )}
     \nobracket, \]
  where $\omega \mapsto \tmop{Law} (X^{\mathbf{Y}}_t) |_{\mathbf{Y} =
  \mathbf{W} (\omega )} \nobracket$ is an explicit regular conditional
  distribution of $\bar{X}_t$ given $\mathfrak{F}''_T$.

 \noindent (iii) Let $\alpha p > 1$. Then the process $\bar{X}$ admits a continuous modification, also denoted by $\bar{X}$.
\end{theorem}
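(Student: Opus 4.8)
The plan is to prove the three items in order, with (i) a measurability bookkeeping exercise, (ii) a disintegration/Fubini argument exploiting the product structure and independence, and (iii) a Kolmogorov continuity argument powered by the quantitative estimate \eqref{equ:RILp}.

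For (i): By Theorem \ref{thm:jm}(i) we may fix a $\mathfrak{C}_T$-optional (hence $(\mathfrak{C}_T\otimes\Pi_T)$-measurable, in particular $(\mathfrak{C}_T\otimes\mathfrak{F}_t')$-measurable at each fixed $t$) version of $(t,\omega',\mathbf{Y})\mapsto X_t^{\mathbf{Y}}(\omega')$. Under Assumption \eqref{ass:4}, $\mathbf{W}:\Omega''\to\mathscr{C}_T$ is $\mathfrak{F}_t''/\mathfrak{C}_T^{(t)}$-measurable in the causal sense $\mathfrak{F}_t^{\mathbf{W}}\subset\mathfrak{F}_t''$; composing, $\omega=(\omega',\omega'')\mapsto \bar X_t(\omega)=X_t^{\mathbf{W}(\omega'')}(\omega')$ is a composition of measurable maps and is therefore $(\mathfrak{F}_t'\otimes\mathfrak{F}_t'')$-measurable, a fortiori $(\mathfrak{F}_t'\vee\mathfrak{F}_T'')$-measurable, for every $t$. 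This is exactly adaptedness to the enlarged filtration $(\mathfrak{F}_t'\vee\mathfrak{F}_T'')_{0\le t\le T}$. The only subtlety is to make sure the chosen optional version is used uniformly in $t$ (so that the object $\bar X$ is a genuine process, not just a family of marginals), which the $\mathfrak{C}_T$-optionality from Theorem \ref{thm:jm} delivers.

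For (ii): Work on the product space $\tmmathbf\Omega=\tmmathbf\Omega'\times\tmmathbf\Omega''$ with $\mathbb{P}=\mathbb{P}'\otimes\mathbb{P}''$. Fix $t$ and a bounded measurable test function $\varphi$. Since $X^{\mathbf{Y}}$ is defined on $\tmmathbf\Omega'$ while $\mathbf{W}$ is defined on $\tmmathbf\Omega''$, they are independent under $\mathbb{P}$, and the map $(\omega',\mathbf{Y})\mapsto\varphi(X_t^{\mathbf{Y}}(\omega'))$ is jointly measurable by (i). By the Fubini–Tonelli theorem, $\omega''\mapsto \int_{\Omega'}\varphi(X_t^{\mathbf{W}(\omega'')}(\omega'))\,\mathbb{P}'(d\omega') = \mathbb{E}'[\varphi(X_t^{\mathbf{Y}})]\big|_{\mathbf{Y}=\mathbf{W}(\omega'')}$ is $\mathfrak{F}_T''$-measurable and is a version of $\mathbb{E}[\varphi(\bar X_t)\mid\mathfrak{F}_T''](\omega)$; this is the standard ``freezing the independent variable'' lemma for conditional expectations. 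Hence $\mu_t^{\mathbf{Y}}:=\tmop{Law}(X_t^{\mathbf{Y}})$ evaluated at $\mathbf{Y}=\mathbf{W}(\omega)$ satisfies $\langle\mu_t^{\mathbf{W}(\omega)},\varphi\rangle=\mathbb{E}[\varphi(\bar X_t)\mid\mathfrak{F}_T''](\omega)$ for each $\varphi$ in a countable measure-determining class, on a common null set; since $\mathbf{Y}\mapsto\mu_t^{\mathbf{Y}}$ is a measurable map into the Polish space of probability measures (again by joint measurability of $X$ in $(\omega',\mathbf{Y})$ and Fubini) and $\mathbf{W}$ is $\mathfrak{F}_T''$-measurable, $\omega\mapsto\mu_t^{\mathbf{W}(\omega)}$ is a bona fide $\mathfrak{F}_T''$-measurable probability kernel, i.e. a regular conditional distribution of $\bar X_t$ given $\mathfrak{F}_T''$.

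For (iii): The aim is to produce a continuous modification of $t\mapsto\bar X_t$ via Kolmogorov. Fix $R>0$ and restrict to the event $\{\tmmathbf\interleave\mathbf{W}\interleave_\alpha\le R\}$; on $\mathscr{C}_T$ these events exhaust $\Omega''$ as $R\uparrow\infty$, so it suffices to get continuity on each, then patch. For $s<t$, decompose $\bar X_t-\bar X_s$ into its three contributions: the Lebesgue drift term is controlled by $\int_s^t|A_r^{\mathbf{W}}|dr$; the Itô term $\int_s^t\Sigma_r^{\mathbf{W}}dB_r$ is handled by the Burkholder–Davis–Gundy inequality giving an $O(|t-s|^{p/2})$ moment bound under integrability of $(\Sigma\Sigma^T)^{\mathbf{Y}}$; and the rough stochastic integral term is exactly bounded by \eqref{equ:RILp}, which under $\alpha p>1$ yields $\mathbb{E}|\int_s^t(F_r,F'_r)(\cdot,\mathbf{Y})d\mathbf{Y}_r|^p\le C_R\fatnorm{(F,F')^{\mathbf{Y}}}^p|t-s|^{\alpha p}$, i.e. a Hölder moment bound with exponent $\alpha p>1$. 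Taking the conditional expectation given $\mathfrak{F}_T''$ (freezing $\mathbf{Y}=\mathbf{W}$) and then the full expectation, one obtains $\mathbb{E}|\bar X_t-\bar X_s|^p\lesssim |t-s|^{(\alpha p)\wedge(p/2)\wedge p}$ on $\{\interleave\mathbf{W}\interleave_\alpha\le R\}$; since $\alpha p>1$, $p/2>1$ (as $\alpha<1/2$ forces $p>2$), and $p>1$, the exponent strictly exceeds $1$, and the Kolmogorov–Chentsov criterion supplies a continuous modification. Finally, combine over $R\in\mathbb{N}$: the modifications agree a.s. on overlaps, so they glue to a single continuous modification of $\bar X$.

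The main obstacle is the bookkeeping in (iii): one must be careful that the constants $C_R$ and the controlledness norm $\fatnorm{(F,F')^{\mathbf{Y}}}$ — and likewise the $L^1$-bounds on $A^{\mathbf{Y}}$, $(\Sigma\Sigma^T)^{\mathbf{Y}}$ — are uniform (or at worst polynomially controlled) in $\mathbf{Y}$ over the sublevel set $\{\interleave\mathbf{Y}\interleave_\alpha\le R\}$, so that after freezing $\mathbf{Y}=\mathbf{W}(\omega'')$ and integrating out $\omega''$ the resulting bound is finite; the $\mathfrak{C}_T$-measurability from Theorem \ref{thm:jm} plus \eqref{equ:RILp} are precisely what makes this legitimate, but spelling out the measurable dependence of these norms on $\mathbf{Y}$ (so that $\mathbb{E}''[\,\cdot\,\mathbf{1}_{\interleave\mathbf{W}\interleave_\alpha\le R}]$ makes sense) requires a little care.
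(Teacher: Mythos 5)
Parts (i) and (ii) of your argument coincide with the paper's proof: composition of the jointly measurable version from Theorem \ref{thm:jm} with $(\omega',\omega'')\mapsto(\omega',\mathbf{W}(\omega''))$ for adaptedness, and a Fubini/``freezing the independent factor'' argument plus measurability of $\mathbf{Y}\mapsto\tmop{Law}(X_t^{\mathbf{Y}})$ for the regular conditional distribution. These are fine.

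In (iii) there is a genuine gap in the localisation. You truncate only on $\{\interleave\mathbf{W}\interleave_\alpha\le R\}$ and then invoke \eqref{equ:RILp}; but that estimate reads $C_R\,\fatnorm{(F,F')^{\mathbf{Y}}}^p|t-s|^{\alpha p}$, and Assumption \eqref{ass:1} only guarantees $\fatnorm{(F,F')^{\mathbf{Y}}}<\infty$ for each fixed $\mathbf{Y}$ --- nothing makes this norm bounded, or even integrable in $\omega''$, over the sublevel set $\{\interleave\mathbf{Y}\interleave_\alpha\le R\}$. So after substituting $\mathbf{Y}=\mathbf{W}(\omega'')$ and integrating out $\omega''$ your moment bound may be infinite, and Kolmogorov does not apply. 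You flag this as ``requires a little care,'' but the uniformity you hope for is simply not available under the standing assumptions. The fix, which is what the paper does, is to localise on the \emph{composed} random variable as well: set $\Omega''_N=\{\omega'':\fatnorm{(F,F')^{\mathbf{Y}}}_{\mathbf{Y}=\mathbf{W}(\omega'')}+\interleave\mathbf{W}(\omega'')\interleave_\alpha\le N\}$ and work with $\bar X^N=\bar X\,1_{\Omega''_N}$, for which the bound $\mathbb{E}|\bar X^N_t-\bar X^N_s|^p\le C_N|t-s|^{\alpha p}$ is legitimate. One then only uses the part of the Kolmogorov argument giving a.s.\ uniform continuity on dyadics, glues over $N$ (using $\mathbb{P}''(\cup_N\Omega''_N)=1$), extends by continuity, and verifies that the extension is a modification of $\bar X$ at every $t$ via convergence in probability (Fubini--Tonelli plus bounded convergence); your ``the modifications agree on overlaps, so they glue'' elides this last verification.

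Two smaller points. First, the drift term: $A^{\mathbf{Y}}\in L^1([0,T])$ gives no H\"older-type moment bound of the form $O(|t-s|^{1+\varepsilon})$, so the Lebesgue part cannot be fed into Kolmogorov as you suggest; it is instead continuous pathwise for a.e.\ $(\omega',\omega'')$ once well-defined, and the paper accordingly reduces to $A=\Sigma=0$ and treats only the rough integral by the Kolmogorov/localisation argument. Second, the same remark applies to the It\^o term, which has a continuous version for each fixed $\omega''$-fibre and is handled by standard localisation rather than by a BDG-plus-Kolmogorov bound under the mere integrability hypothesis on $(\Sigma\Sigma^T)^{\mathbf{Y}}$.
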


\begin{proof}
  For fixed $t \in [0, T]$, Theorem \ref{thm:jm} implies that $(
  \omega', \mathbf{Y}) \mapsto X^{\mathbf{Y}}_t (\omega')$ is
  $(\mathfrak{F}'_t \otimes \mathfrak{C}_T)$-measurable. Composition with
  $( \omega', \omega'') \mapsto ( \omega', \mathbf{W}
  (\omega''))$, clearly $(\mathfrak{F}'_t \otimes
  \mathfrak{F}''_T)$/$(\mathfrak{F}'_t \otimes \mathfrak{C}_T)$-measurable by
  \eqref{ass:4}, then yields (i).
  
  (ii) 
  Without loss of generality, assume $t = T$. Since $\mathfrak{F}^{\mathbf{W}}_T \subset
  \mathfrak{F}_T''$ we show that for all bounded and continuous functions $g$,
  \[ \label{equiv-condi} \mathbb{E} [g (\bar{X}_T) | \mathfrak{F}''_T]
     =\mathbb{E} [g (X^{\mathbf{Y}}_T)] |_{\mathbf{Y} = \mathbf{W} (\omega)}
     =\mathbb{E}' [g (X^{\mathbf{Y}}_T)] |_{\mathbf{Y} = \mathbf{W}
     (\omega'')} \]
  which amounts to show that for all bounded $\xi \in \mathfrak{F}''_T$
  \[ \mathbb{E} \left[ \xi \hspace{0.17em} \mathbb{E}' [g (X^{\mathbf{Y}}_T)]
     |_{\mathbf{Y} = \mathbf{W} (\omega'')} \right] =\mathbb{E} [\xi g
     (\bar{X} _T)] \]
  which follows from Fubini. Since $\mathfrak{F}^{\mathbf{W}}_T \subset
  \mathfrak{F}_T''$ and $\mathbf{W} \in \mathfrak{F}^{\mathbf{W}}_T$ we also
  have $\tmop{Law} (\bar{X}_t | \mathfrak{F}^{\mathbf{W}}_T) (\omega) =
  \tmop{Law} (X^{\mathbf{Y}}_t) |_{\mathbf{Y} = \mathbf{W} (\omega )}
  \nobracket$. (See \cite[Lem. 8.7]{Kal21} for similar arguments formulated as abstract lemma.)
  To see that this construction yields a regular conditional
  distribution, it remains to see that $\omega \mapsto \tmop{Law}
  (X^{\mathbf{Y}}_t) |_{\mathbf{Y} = \mathbf{W} (\omega )} \nobracket$
  is measurable. But this is obvious from measurability of $\omega \mapsto
  \mathbf{W} (\omega )$, and measurabilty of $\mathbf{Y} \mapsto
  \tmop{Law} (X^{\mathbf{Y}}_t) \in \mathcal{M}_1$, {  where $\mathcal{M}_1$ is the space of probability measures}, equipped with weak
  convergence, the latter being a consequence of Theorem \ref{thm:jm} and
  Fubini.
  
  (iii) Remark that a.s. continuity of $X^{\mathbf{Y}} $, for
  fixed $\mathbf{Y} \in \mathscr{C}_T$, is clear, whereas we cannot hope
  that null sets can be taken independent of $\mathbf{Y}$.
  
  Assume at first that $\tmmathbf{\interleave W \interleave}_{\alpha} \in \cap_{p\ge1}L^p
  $ and that all coefficients fields are bounded in the
  appropriate norms, uniformly in $\mathbf{Y}$. Then, by (ii) for any $0
  \leqslant s \leqslant t \leqslant T$ and any $p < \infty$, and $n \in
  \mathbb{N}$,
  \[ \mathbb{E} \left( \left| \delta {\bar{X}_{s, t}}  \right|^p \wedge n
     \right) =\mathbb{E} \left( \mathbb{E} \left( \left| \delta {\bar{X}_{s,
     t}}  \right|^p \wedge n | \nobracket \mathfrak{F}''_T \right) \right)
     =\mathbb{E} ((\mathbb{E} | \delta X^{\mathbf{Y}}_{s, t} |^p \wedge n)
     |_{\mathbf{Y} = \mathbf{W} (\omega )} \nobracket) . \]
  By monotone convergence, $\mathbb{E} \left( \left| \delta {\bar{X}_{s, t}} 
  \right|^p \right) = \mathbb{E} ((\mathbb{E} | \delta X^{\mathbf{Y}}_{s, t}
  |^p) |_{\mathbf{Y} = \mathbf{W} (\omega )} \nobracket) \lesssim | t - s
  |^{\alpha p}$, where we used basic estimates on Lebesgue integral, the BDG
  inequality, using uniform boundedness of $\Sigma$, and $L^p$-estimate for
  $\int_s^t (F_r, F'_r) (\omega, \mathbf{Y}) d \mathbf{Y}_r,$ as stated in \eqref{equ:RILp}. 
  By Kolmogorov's criterion
  $\bar{X}$ then has a continuous modification.

The general case, with
$\tmmathbf{\interleave W \interleave}_{\alpha}$ is  finite a.s., and the coefficients fields as put forward in \eqref{ass:1}, the proof follows by a localisation inside the Kolmogorov argument. The continuity of Lebesgue integral and stochastic integral parts follows by a standard localisation argument. So without loss of generality, we assume $A=\Sigma=0$. It remains to show the randomised rough integral has a continuous modification. To this end, assume $T=1$, let $D_n$ be the $n$-th dyadic partition of $[0,1]$, and let $D= \cup_{n=1}^{\infty} D_n.$

Step 1. We show $\bar X$ is uniformly continuous on $D$ a.s. Set 
$$
\Omega''_N = \{ \omega'' : \fatnorm{ (F, F')^{\mathbf{Y}} }_{\mathbf{Y} =
\mathbf{W} (\omega'')} + \interleave \mathbf{W} (\omega'') \interleave_{\alpha} \leqslant N
\},
$$ 
and consider $\bar X^N(\ome', \ome''):= \bar X(\ome', \ome'') 1_{\Omega''_N}(\omega'')$. It follows by estimate of stochastic rough integrals again that for any $s,t\in [0,1],$ and since $p>\frac{1}{\alpha}$ 
$$
\E| \delta \bar X^N_{s,t}  |^p=\E''[1_{\Omega''_N} [\E'|\delta X^{\mathbf{Y}}_{s,t}|^p]_{\mathbf{Y}= \mathbf{W}(\ome'') } ] \le C_N |t-s|^{\alpha p}.
$$  
Then it follows by the proof of Kolmogorov's criterion (see e.g. \cite[Theorem 2.9]{LeG16}) that there exists $\Omega^N \subseteq \Omega$ with $\PP(\Omega^N)=1$, such that for any $\ome \in \Omega^N,$ $ \{\bar X^N_t(\ome); t \in D\}$ is uniformly continuous in $t$. Note that $\Omega''_*:= \cup_{N=1} \Omega''_N$ satisfies $\PP''(\Omega''_*)=1$. Thus let $\Omega^*:= \cap_{N=1} \Omega^N \cap (\Omega' \times \Omega''_*)$ and we have $\PP(\Omega^*)=1.$ Moreover, for any $\ome=(\ome', \ome'') \in \Omega^*,$ there exists $N \ge 1,$ such that $\bar X(\ome)= \bar X^N(\ome)$, and $\bar X^N_t(\ome)$ is continuous in $t \in D.$

Step 2. Now we get a continuous modification for $\bar X.$ For any $\ome \in \Omega^*$, let 
\begin{equation}
	\tilde{X}_t:=
	\left\{ 
\begin{array}{l}
 	 \bar X_t,  \text{ if } t \in D, \\
 	 \lim_{ s_n \rightarrow t,   s_n \in D } X_{s_n}.
\end{array}
\right.
\end{equation} 
By the uniform continuity of $\bar X_t(\ome),\ t\in D$, we see that $\tilde X(\ome)$ is continuous. It remains to check that $\tilde X$ is a modification of $\bar X.$ Indeed, for any $t \in [0,1], $ there exists a sequence $\{s_n\}_n \subseteq D$ with $s_n \rightarrow t.$ Note that $\bar X_{s_n} = \tilde X_{s_n}  \stackrel{n}{\rightarrow} \tilde X_t $, a.s. On the other hand, for any $\vep>0,$ by the Fubini-Tonelli theorem and bounded convergence theorem, we have
$$
\PP(|\bar X_{s_n}- \bar X_{t}|> \vep)= \E'' [\E'[1_{| X^{\BY}_{s_n}-   X^{\BY}_{t}|>\vep}]_{\BY=\mathbf{W}(\ome'')} ] \rightarrow 0, \ \ n \rightarrow \infty,
$$ 
which implies $\bar X_{t}= \tilde   X_t$ a.s.
\end{proof}


\subsection{It\^{o} randomisation under causality}

We now see that a rough It{\^o} processes on $\tmmathbf{\Omega}'$
\[ {X^{\mathbf{Y}}_t}  (\omega') {= X^{\mathbf{Y}}_0}  (\omega') + \int_0^t
   A_s (\omega', \mathbf{Y}) d s + \int_0^t \Sigma_s (\omega', \mathbf{Y})
   d B_s (\omega') + \int_0^t (F_s, F'_s) (\omega', \mathbf{Y}) d
   \mathbf{Y}_s, \]
with causal coefficients, in the sense of assumption \eqref{ass:3}, becomes a classical
{     It\^{o} diffusion} upon Brownian and also more general {   It\^{o}} randomisation. We note that local martingale randomisation appeared naturally in Section \ref{sec:vmf}.

\begin{definition} A $(\mathfrak{F}_t)$-adapted continuous process $S$ is called $(\mathfrak{F}_t)$-{\em  {It\^o diffusion}} 
if it can be written as
  \[ \text{S$_t$} = S_0 + \int_0^t \beta_s d s + \int_0^t \gamma_s d W_s,
     \quad 0 \leqslant t \leqslant,T \]
with continuous and adapted $\beta, \gamma$, where $W$ is a $(\mathfrak{F}_t)$-Brownian motion. We drop reference to the filtration $(\mathfrak{F}_t)$ when no confusion is possible.
\end{definition} 

\begin{lemma}
  Let $S$ be an {It\^o} diffusion on $[0, T].$ 
Then for every $\alpha < 1 / 2$ and with probability one,
  the It{\^o} lift $\mathbf{S}^{\tmop{Ito}} = (S, \mathbb{S})$ has sample paths in the (Polish) rough paths space
  $(\mathscr{C}^{0, \alpha, 1}, \rho_{\alpha, 1})$, as defined in Section \ref{sec:not}.
%
%
\end{lemma}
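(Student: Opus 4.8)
\medskip
\noindent\textbf{Proof strategy.} The plan is to split the claim into three pieces: (a) an almost-sure $\alpha$-H\"older rough path bound for the It\^o lift $\mathbf{S}^{\mathrm{Ito}}=(S,\mathbb{S})$; (b) the identification of its bracket with the quadratic variation of $S$, which will be visibly $C^1$; and (c) membership of $\mathbf{S}^{\mathrm{Ito}}$ in the Polish space $\mathscr{C}^{\circ,\alpha}_T$ obtained as closure of smooth rough paths. Pieces (a) and (b) together give $\mathbf{S}^{\mathrm{Ito}}\in\mathscr{C}^{\alpha}_T$ with $\|\mathbf{S}^{\mathrm{Ito}}\|_{C^1}<\infty$, and then (c) promotes this to $\mathbf{S}^{\mathrm{Ito}}\in\mathscr{C}^{\circ,\alpha,1}_T$. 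At the very start I would reduce, by the usual stopping-time localisation applied to the continuous adapted coefficients $\beta,\gamma$, to the case where $\beta$ and $\gamma$ are bounded; since the asserted property only concerns the restriction of a sample path to $[0,T]$ and the localising times increase to infinity almost surely, the exceptional null sets produced along the localisation union to a null set and are discarded at the end.

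\medskip
\noindent\textbf{Moment bounds, Kolmogorov, and the bracket.} With $\beta,\gamma$ bounded I would record, for every finite $p$, the estimates $\mathbb{E}\,|(\delta S)_{s,t}|^{p}\lesssim_{p}|t-s|^{p/2}$ and $\mathbb{E}\,|\mathbb{S}_{s,t}|^{p}\lesssim_{p}|t-s|^{p}$, where $\mathbb{S}_{s,t}=\int_s^t(\delta S)_{s,r}\otimes dS_r$ is the It\^o iterated integral: the drift contributes a term of order $|t-s|^{p}$ and the martingale contributions are controlled by the Burkholder--Davis--Gundy inequality, applied once to $S$ and once more to $\mathbb{S}$, whose bracket is dominated by $\int_s^t|(\delta S)_{s,r}|^{2}\,dr$ together with the first estimate. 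The Kolmogorov criterion for rough paths then yields that a.s.\ $\mathbf{S}^{\mathrm{Ito}}\in\mathscr{C}^{\alpha}_T$ for every $\alpha<1/2$. For the bracket, the It\^o product rule gives $\mathbb{S}_{s,t}+\mathbb{S}_{s,t}^{T}=(\delta S)_{s,t}\otimes(\delta S)_{s,t}-[S]_{s,t}$, so the rough-path bracket of $\mathbf{S}^{\mathrm{Ito}}$ (defined in Section \ref{sec:not} via $(\delta Y)\otimes(\delta Y)=\mathbb{Y}+\mathbb{Y}^{T}+\delta[\mathbf{Y}]$) equals the quadratic covariation $[S]$; since the drift does not contribute, $[S]_t=\int_0^t\gamma_r\gamma_r^{T}\,dr$, which is continuously differentiable in $t$ with continuous derivative $\gamma\gamma^{T}$, whence $\|\mathbf{S}^{\mathrm{Ito}}\|_{C^1}<\infty$ on $[0,T]$.

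\medskip
\noindent\textbf{Closure of smooth rough paths, and the main obstacle.} It remains to exhibit smooth rough paths converging to $\mathbf{S}^{\mathrm{Ito}}$ in $\rho_{\alpha}$. I would route this through the Stratonovich lift $\mathbf{S}^{\mathrm{Strat}}=(S,\mathbb{S}+\tfrac12\delta[S])$, which is a.s.\ a geometric $\alpha$-H\"older rough path: this is the classical Wong--Zakai statement that the canonical lifts of piecewise-linear (or mollified) approximations of $S$ converge in $\rho_{\alpha}$ to $\mathbf{S}^{\mathrm{Strat}}$ --- for Brownian $S$ this is in \cite{FH20}, and the general It\^o-semimartingale case with bounded coefficients follows, e.g., by a Dambis--Dubins--Schwarz time change combined with the bounded drift, or from Kurtz--Protter-type convergence results. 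Thus $\mathbf{S}^{\mathrm{Strat}}\in\mathscr{C}^{\alpha}_{g,T}\subset\mathscr{C}^{\circ,\alpha}_T$. Now $\mathbf{S}^{\mathrm{Ito}}$ is obtained from $\mathbf{S}^{\mathrm{Strat}}$ by subtracting the increment $\tfrac12\delta[S]$ from the second level, with $[S]\in C^1$ by the previous step: if $\mathbf{X}^{n}=(X^{n},\mathbb{X}^{n})$ are smooth rough paths with $\mathbf{X}^{n}\to\mathbf{S}^{\mathrm{Strat}}$ in $\rho_{\alpha}$, then $(X^{n},\mathbb{X}^{n}-\tfrac12\delta[S])$ are again smooth rough paths --- Chen's relation is preserved under adding an increment to the second level, $X^{n}$ is smooth, and $t\mapsto\mathbb{X}^{n}_{0,t}-\tfrac12[S]_t$ is $C^1$ --- and they converge in $\rho_{\alpha}$ to $\mathbf{S}^{\mathrm{Ito}}$. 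Hence $\mathbf{S}^{\mathrm{Ito}}\in\mathscr{C}^{\circ,\alpha}_T$ a.s., and with the $C^1$ bracket this gives $\mathbf{S}^{\mathrm{Ito}}\in\mathscr{C}^{\circ,\alpha,1}_T$ a.s.; undoing the localisation finishes the proof. The one genuinely non-elementary ingredient is the classical fact invoked here, that the canonical lift of a continuous It\^o semimartingale (with bounded coefficients) is the $\rho_{\alpha}$-limit of the canonical lifts of its smooth approximations --- equivalently, that its Stratonovich lift is a \emph{geometric}, and not merely weakly geometric, rough path; the moment bounds, the Kolmogorov criterion, the It\^o-product-rule computation of the bracket, the harmlessness of the $C^1$ second-level correction, and the localisation are all routine.
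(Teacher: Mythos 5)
Your proposal is correct and follows the same skeleton as the paper's (very terse) proof: localise to bounded $\beta,\gamma$, apply the Kolmogorov criterion to get $\alpha$-H\"older rough path regularity, and identify the rough path bracket $[\mathbf{S}]$ with the quadratic covariation $[S^i,S^j]=\int\gamma\gamma^T\,dr\in C^1$. The one place where you take a genuinely different route is the membership in the separable space $\mathscr{C}^{\circ,\alpha}$: you go through the Stratonovich lift, invoke Wong--Zakai (smooth/piecewise-linear approximations converging in $\rho_\alpha$) to place $\mathbf{S}^{\mathrm{Strat}}$ in the closure of canonical lifts, and then correct the second level by $-\tfrac12\delta[S]$. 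The intended (and lighter) argument is purely deterministic: Kolmogorov actually yields $\mathbf{S}^{\mathrm{Ito}}\in\mathscr{C}^{\alpha'}$ for \emph{every} $\alpha'<1/2$, in particular for some $\alpha'>\alpha$, and the embedding $\mathscr{C}^{\alpha'}\subset\mathscr{C}^{\circ,\alpha}$ for $\alpha'>\alpha$ is a standard interpolation/density fact (cf.\ the discussion around Exercise 2.8 in \cite{FH20}); this avoids any stochastic approximation theorem. Your route is valid but imports a heavier probabilistic input (Wong--Zakai for general continuous It\^o semimartingales), and it contains one small imprecision you should smooth out: $(X^n,\mathbb{X}^n-\tfrac12\delta[S])$ is not literally a \emph{smooth} rough path since $[S]$ is only $C^1$; one must either mollify $[S]$ as well (harmless, since $C^1$ convergence dominates $2\alpha$-H\"older convergence of the increments) or simply argue that $\mathscr{C}^{\circ,\alpha}$ is closed under perturbation of the second level by increments of $C^1$ paths. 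With that fixed, both arguments prove the lemma; the interpolation route is shorter and is what the paper's ``clear from Kolmogorov's criterion'' is pointing at.
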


\begin{proof}
  For $\beta, \gamma$ 
   uniformly bounded the statements is clear from Kolmorogov's
  criterion, noting a.s. consistency of quadratic covariation $[S^i, S^j], 1
  \leqslant i, j \leqslant d_S$ with rough path bracket $[\mathbf{S}]$. The
  general case follows from localisation. 
\end{proof}

%

\begin{assumption} \label{ass:5} Refining \eqref{ass:4}, assume 
$\mathbf{S} = \mathbf{S}^{\tmop{Ito}}$ is a $\mathfrak{F}_T''$-It{\^o} diffusion lifted {  (via It\^{o} integration)} to a rough path on $[0, T]$ on $\tmmathbf{\Omega}''$, yielding a measurable map 
\[ \mathbf{S} : (\Omega'', \mathfrak{F}_T'', \mathbb{P}'') \rightarrow
   (\mathscr{C}_T, \mathfrak{C}_T).  \]
 \end{assumption}
  

\begin{theorem}
  \label{thm:doublyIto}
  Assume \eqref{ass:1}+\eqref{ass:2}+\eqref{ass:3}+\eqref{ass:5} and let $\bar{X}$ be
  (the  continuous modification) of  
  $ X_t^{\mathbf{Y}} |_{\mathbf{Y} = \mathbf{S} (\omega)}$ 
  as guaranteed by Theorem
  \ref{thm:rcd}.  Then $\bar{X} = \bar{X} (\omega', \omega'')$ is indistinguishable from the
  ``doubly'' It{\^o} process
    \begin{eqnarray*}
    &  & \bar{X}_0 (\omega) + \int_0^t \bar{A}_s (\omega) d s + \int_0^t
    \bar{\Sigma}_s (\omega) d B_s (\omega) + \int_0^t \bar{F}_s (\omega) d S_s
    (\omega)\\
    & = & \bar{X}_0 (\omega) + \int_0^t (\bar{A}_s + \bar{F}_s \beta_s) d s +
    \int_0^t \bar{\Sigma}_s (\omega) d B_s (\omega) + \int_0^t \bar{F}_s
    (\omega) \gamma_s (\omega) d W_t (\omega)
  \end{eqnarray*}
  where, for $\Xi \in \{ A, \Sigma, F \}$ we write $\bar{\Xi}_s (\omega)
  \assign \Xi_s (\omega' , \mathbf{S} (\omega''))$. In particular, 
  \[ \tmop{Law} (\bar{X}_t | \mathfrak{F}''_T) (\omega) = \tmop{Law}
   (X^{\mathbf{Y}}_t) |_{\mathbf{Y} = \mathbf{S} (\omega )} \nobracket.
\]
\end{theorem}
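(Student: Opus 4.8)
The plan is to compute the dynamics of $\bar X$ by ``freezing'' the rough path $\mathbf Y$ at $\mathbf S(\omega'')$ inside the three integrals defining $X^{\mathbf Y}$, and then to recognise that the frozen rough integral against $\mathbf S^{\mathrm{Ito}}$ coincides with the corresponding It\^o integral against the semimartingale $S$. Concretely, I would first invoke Theorem \ref{thm:rcd} to get the continuous modification $\bar X$ and the identity $\mathrm{Law}(\bar X_t\,|\,\mathfrak F''_T)(\omega)=\mathrm{Law}(X^{\mathbf Y}_t)|_{\mathbf Y=\mathbf S(\omega)}$; that final ``In particular'' clause of the statement is then immediate from part (ii) of that theorem, so the substance is the indistinguishability claim.

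Next I would treat the Lebesgue and It\^o--$B$ terms. Using causality (Assumption \ref{ass:3}) together with the $\mathfrak C_T$-optional/progressive joint measurability from Theorem \ref{thm:jm}, the maps $(s,\omega)\mapsto \bar A_s(\omega)$ and $(s,\omega)\mapsto\bar\Sigma_s(\omega)$ are adapted to $(\mathfrak F'_t\vee\mathfrak F''_T)$ — indeed, causality is exactly what upgrades ``$\mathfrak F''_T$-measurable in the parameter'' to ``$\mathfrak F''_t$-measurable'', which is needed for $\bar\Sigma\,dB$ to make sense as an It\^o integral in the enlarged filtration, using that $B$ remains a Brownian motion there by independence of $\Omega'$ and $\Omega''$. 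Then I would argue that $\int_0^t\Sigma_s(\omega',\mathbf Y)\,dB_s(\omega')|_{\mathbf Y=\mathbf W(\omega'')}=\int_0^t\bar\Sigma_s(\omega)\,dB_s(\omega)$ by a measurable-selection/monotone-class argument: it holds for simple integrands by inspection, and both sides are $L^2$-continuous in the integrand, with the parametrised version being the $\mathfrak C_T$-progressive version supplied by the measurable selection results cited in Theorem \ref{thm:jm}; freezing the parameter commutes with the $L^2$ limit by Fubini on $\Omega'\times\Omega''$. The same scheme handles the $ds$ term trivially.

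The heart of the matter is the rough term: I must show $\int_0^t(F_s,F'_s)(\omega',\mathbf Y)\,d\mathbf Y_s|_{\mathbf Y=\mathbf S(\omega'')}=\int_0^t\bar F_s(\omega)\,dS_s(\omega)$, i.e. that the stochastic rough integral, once the driver is randomised to an It\^o semimartingale rough path, collapses to an ordinary It\^o integral of $\bar F$ against $S$. I would do this in two stages. First, fix $\omega''$ (hence $\mathbf Y=\mathbf S(\omega'')$ deterministic) and work on $\Omega'$: here one compares the definition of $\int(F,F')\,d\mathbf Y$ as the $L^p(\Omega')$ limit of compensated Riemann sums $\sum_{[u,v]}\big(F_u(\delta Y)_{u,v}+F'_u\mathbb Y_{u,v}\big)$ against the It\^o sum $\sum F_u(\delta Y)_{u,v}$; the difference is controlled by $\sum F'_u\mathbb Y_{u,v}$, and since $\mathbb S_{u,v}$ is (after the bracket correction, which vanishes by the $\mathscr C^{\circ,\alpha,1}$ structure and Assumption \ref{ass:5} pinning the bracket) an $\mathfrak F''$-conditionally centred, mean-zero increment while $F'_u$ is $\mathfrak F'_u$-measurable — hence independent — the cross term has mean zero and the stochastic sewing / BDG bound on $\|\sum F'_u\mathbb S_{u,v}\|_p$ along the dyadic mesh goes to zero. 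Second, I would integrate over $\omega''$: by the estimate \eqref{equ:RILp} with the uniform-in-$\mathbf Y$ bounds on a localising set $\Omega''_N=\{\fatnorm{(F,F')^{\mathbf W(\omega'')}}+\interleave\mathbf W(\omega'')\interleave_\alpha\le N\}$ (as in the proof of Theorem \ref{thm:rcd}(iii)), the convergence is dominated, so the $L^p(\Omega)$ limit of the joint Riemann sums equals both the randomised rough integral (by Theorem \ref{thm:jm} and Fubini) and the It\^o integral $\int\bar F\,dS$, and a localisation in $N$ removes the boundedness assumption.

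The main obstacle I expect is precisely this rough-to-It\^o identification: one must verify that the randomised stochastic rough integral is genuinely the $L^p(\Omega)$-limit of the \emph{same} partially-compensated Riemann sums one would write for the It\^o integral, with the compensator $\sum F'_u\mathbb S_{u,v}$ provably negligible, and this requires care about which filtration the conditional expectations in the stochastic sewing lemma are taken with respect to — the rough-SDE theory of \cite{FHL21x} conditions on $\mathfrak F'_\cdot$, whereas after randomisation the relevant filtration is $\mathfrak F'_\cdot\vee\mathfrak F''_T$, and one needs independence of $B$ and $\mathbf S$ plus the $\mathfrak C_T$-progressive measurability and causality to see that conditioning on the larger filtration does not spoil the vanishing of the compensator. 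Once that bookkeeping is in place, the last displayed line (rewriting $\int\bar F\,dS=\int\bar F\beta\,ds+\int\bar F\gamma\,dW$) is just the definition of an It\^o semimartingale, and the conditional-law identity is inherited verbatim from Theorem \ref{thm:rcd}(ii).
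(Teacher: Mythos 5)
Your overall plan coincides with the paper's proof: treat the three integrals separately, identify each randomised integral with its doubly-stochastic counterpart by approximating with Riemann (or simple-integrand) sums and commuting the substitution $\mathbf Y=\mathbf S(\omega'')$ with limits in probability (the paper does this via \cite[Corollary 8.8]{FLZ24x}; your simple-integrand/monotone-class route for the $ds$ and $dB$ terms is precisely the alternative recorded in the Remark following the proof), and reduce the rough term to the vanishing of the compensator $\sum_{[u,v]\in\pi}\bar F'_u\mathbb S_{u,v}$ in $\mathbb P$-probability. The final ``in particular'' is read off from Theorem \ref{thm:rcd}(ii) in both arguments.

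The one step that does not survive scrutiny as written is your first stage for the compensator. Having fixed $\omega''$, the array $\mathbb S_{u,v}(\omega'')$ is a deterministic second-level rough path, so it cannot simultaneously be ``a conditionally centred, mean-zero increment'' independent of $F'_u$: for a general fixed rough path, $\sum_u F'_u\,\mathbb Y_{u,v}$ does \emph{not} vanish as $|\pi|\to0$ --- that is exactly why the compensated sum, and not the plain Riemann sum, defines the rough stochastic integral. The centring/BDG argument has to be run jointly on $\Omega'\times\Omega''$, and with respect to the product filtration $\mathfrak F_t=(\mathfrak F'_t\otimes\mathfrak F''_t)\vee\mathfrak N$ (under which $S$ is still an It\^o semimartingale and the martingale part of $\mathbb S_{u,v}$ is centred given $\mathfrak F_u$), not with respect to $\mathfrak F'_\cdot\vee\mathfrak F''_T$ as you propose: conditioning on the whole of $\mathfrak F''_T$ reveals $\mathbb S$ and destroys precisely the centring you need. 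This is the content of \cite[Proposition 8.1]{FLZ24x}, which the paper invokes; the paper's own sketch reaches the same conclusion by writing $\mathbb P\bigl(\bigl|\sum\bar F'_u\mathbb S_{u,v}\bigr|>\vep\bigr)$ via Fubini and passing to the limit by bounded convergence. With that correction, your localisation over the sets $\Omega''_N$, the identification of $\lim_{|\pi|\to0}\sum\bar F_u\,\delta S_{u,v}$ with $\int\bar F\,dS$, and the final rewriting via $dS=\beta\,dt+\gamma\,dW$ all go through as in the paper.
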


\begin{proof}
We only discuss the case $ \beta = 0$. Since $\gamma_t$ is $\PP''$-a.s. continuous, 
  \[ \int_0^T \bar{F}_s (\omega) \gamma_s (\omega) \gamma^{\top}_s (\omega)
     \bar{F}_s^{\top} d s \leqslant \| \bar{F} \|_{\sup}^2 \int_0^T \gamma_s
     (\omega) \gamma^{\top}_s (\omega) d s < \infty \]
  which makes the last stochastic integral well-defined. 
By assumption \eqref{ass:5}, we have that $\bar \Xi(\ome)$ is $\mathfrak{F}_t$-adapted. 

We put $A_{i,n}^t(\ome', \mathbf{Y})= \frac{i}{2^n} \text{Leb}\left( \left\{r\in[0,t]: A_r(\ome', \mathbf{Y}) \in [ \frac{i}{2^n},  \frac{i+1}{2^n}) \right\}  \right)$. 
For any $(t,\mathbf{Y}),$ we have
$$
\int_0^t A_r(\ome', \mathbf{Y}) dr
=:\lim_{n \rightarrow \infty} \sum_{i=-n2^n}^{n2^n-1}  A_{i,n}^t(\ome', \mathbf{Y}),
$$
for each $\omega'$ and hence in $\mathbb{P}'$-probability.
Then by \cite[Corollary 8.8]{FLZ24x},  
$$
\left( \int_0^t A_s(\ome', \mathbf{Y}) ds \right) \Big|_{\mathbf{Y}=\mathbf{S}(\ome'')} =   \lim_{n \rightarrow \infty} \sum_{i=-n2^n}^{n2^n-1}   A_{i,n}^t(\ome', \mathbf{S}(\omega'')) \quad \text{in
     $(\mathbb{P}' \otimes \mathbb{P}'')$-probability. }
$$
On the other hand, the limit above is $\int_0^t \bar A_s(\ome) ds$, which implies the equivalence of the Lebesgue integral part. 
Remark that, in case that $A$ above is continuous in $t$, we could have proceeded by Riemann sum approximations, reaching the same conclusion. 
In the same spirit, using classical Riemann sum approximations to stochastic integrals for integrands with continuous sample paths, we can see that 
if $\Sigma$ is continuous for any $(\ome', \mathbf{Y})$, we have $\int_0^t \Sigma_s(\ome', \mathbf{Y}) dB_s |_{\mathbf{Y}=\mathbf{S}(\ome'')} = \int_0^t \bar \Sigma_s(\ome) dB_s .$ For the general case, since (A2.opt) implies (A2.prog), it suffices to consider the latter. 
Without loss of generality, assume $\Sigma$ is bounded (otherwise one may apply a localisation and repeat a similar argument as below). Consider 
$$
\sigma_r^{n}(\ome', \BY):= n\int_{r-\frac{1}{n}}^{r}\Sigma_{s\vee0}(\ome', \BY)ds,
\ \ \ r\in [0,T].
$$
It is evident that $r\mapsto \sigma^{n}(\ome', \BY)$ is continuous. Moreover, by the Lebesgue differentiation theorem, for each $(\omega',\BY)$,   $\lim_{n} \sigma^n_r(\ome',\BY) = \Sigma_r(\ome',\BY)$ for a.e. $r\in[0,T]$. 
Then for any $t,\BY,$ by the Lebesgue dominated convergence theorem, 
$$
\lim_n \int_0^t |\sigma^n_r - \Sigma_r|^2(\ome',\BY) dr =0
$$
for each $\omega'$ and hence in  $\mathbb{P}' $-probability. 
This implies that $\int_0^t \sigma^n_r (\ome',\BY) dB_r $ converges to $\int_0^t \Sigma_r (\ome',\BY) dB_r $ in
     $\mathbb{P}' $-probability. Note that 
 $$
 \left( \int_0^t \sigma^n_r (\ome',\BY) dB_r \right) \Big|_{\BY= \mathbf{S}(\ome'')} = \int_0^t \sigma^n_r (\ome',\mathbf{S}(\ome'')) dB_r,
 $$
and by a similar argument, $\int_0^t \sigma^n_r (\ome',\mathbf{S}(\ome'')) dB_r$ converges in $\PP'\otimes \PP''$-probability to $\int_0^t \bar \Sigma_r (\ome) dB_r$. We obtain the equivalence from the stochastic integral part again by \cite[Corollary 8.8]{FLZ24x}.

It remains to show the randomised stochastic rough integral equal to the stochastic integral driven by $W$, which follows along the lines of {\cite[Proposition 8.1]{FLZ24x}}. Indeed, by the convergence of rough stochastic Riemann sum and  \cite[Corollary 8.8]{FLZ24x}, we have 
 \begin{equation}\label{eq:r-riem}
    \left( \int_0^t (F, F')^{\mathbf{Y}} (\omega', \mathbf{Y}) d \mathbf{Y} \right)  \Big|_{\mathbf{Y} = \mathbf{S} (\omega'')} = \lim_{ |\pi| \rightarrow 0} \sum_{[u,v] \in \pi}  (\bar F_u \delta S_{u,v}+   \bar F'_u \mathbb{S}_{u,v}) (\omega', \ome'')  \quad \text{in
     $\PP$-probability. } 
\end{equation}
On the other hand, note that for any $\vep>0,$
$$
\PP \left( \left|\sum_{[u,v] \in \pi}\bar F'_u \mathbb{S}_{u,v}  \right|> \vep  \right) = \E'' \left( \PP' \left( \left|\sum_{[u,v] \in \pi}  F'_u(\ome',\mathbf{Y}) \mathbb{S}_{u,v}  \right|> \vep  \right)_{\mathbf{Y}=\mathbf{S}(\ome'')} \right),
$$
the right-hand side of which converges to null as $|\pi|$ vanishes by the bounded convergence theorem. It follows that the left-hand side of \eqref{eq:r-riem} is the limit in $\PP$-probability of $\sum_{[u,v] \in \pi}  \bar F_u \delta S_{u,v}$, which is exactly $\int_0^t \bar F_s dS_s$ by classical stochastic calculus, and hence completes our proof.
(The final ``in particular''  is immediate from Theorem \ref{thm:rcd}.) 
\end{proof}

\begin{remark} Concerning the It\^o and Lebesgue integral in the last proof. To see that, $\mathbb{P}$ a.s.,
\[ \left( \int_0^t \Sigma_r (\ome', \BY) dB_r \right) |_{\mathbf{Y} =
   \mathbf{S} (\ome'')}  = \int_0^t \bar{\Sigma}_r (\ome) dB_r \]
we can also observe that the desired equality follows by direct inspection for
``simple'' integrands of the form
\[ \Sigma_r (\ome', \BY) = \sum_i a_i (\omega') b_i \left( \BY^{t_{i - 1}}
    \right) 1_{(t_{i - 1}, t_i]}, \quad a_i {\in \mathfrak{F}_{t_{i
   - 1}}}  . \]
A monotone class argument extends this first to all bounded progessive
integrands, a truncation argument to give the equality in the full generality
of $a (\Sigma \Sigma^T)^{\mathbf{Y}} \in L^1 ([0, T]),$ $\mathbb{P}'$ a.s.
Similarly, but easier, one sees
\[ \left( \int_0^t A_s (\ome', \mathbf{Y}) ds \right) |_{\mathbf{Y} =
   \mathbf{S} (\ome'')} = \int_0^t \bar{A}_s (\ome) ds. \]
\end{remark}

\section{Applications to rough SDEs}

Let  $\mathbf{Y} \in \mathscr{C}^\alpha([0,T])$. In \cite{FHL21x} sufficient conditions on the coefficients,
\[ \Theta \in \{ b, \sigma, (f, f') \}, \]
were given to ensure  that, for
\[  d X_t = b_t (X_t, \omega ; \mathbf{Y }) d t + \sigma_t
   (X_t, \omega ; \mathbf{Y }) d B_t + (f_t, f'_t) (X_t, \omega ; \mathbf{Y}) d
   \mathbf{Y}, \qquad X_0 = \xi (\omega, \mathbf{Y}) \]
has a unique solution $X = X^{\mathbf{Y}}$, in the precise sense of Definition 4.2 in \cite{FHL21x}, which included
the requirements that 
\begin{itemizeminus}
  \item all coefficients are progressive; i.e. $(t, \omega) \mapsto
  \Theta^{\mathbf{Y}}_t (\omega)$ is progressively measurable
  
  \item $t \mapsto b_t (X_t, \omega ; \mathbf{Y }), t \mapsto (\sigma
  \sigma^T )_t (X_t, \omega ; \mathbf{Y }) \in L^1 ([0, T])$ for a.e. $\omega$
  
  \item $t \mapsto (F, F')^{\mathbf{Y}}_t : = (f_t, (D f_t) f_t + f'_t)
  (X_t, \omega ; \mathbf{Y})  $ is stochastic controlled
  w.r.t. $Y$.
\end{itemizeminus}
This says precisely {  (cf. Remark \ref{rem:A1forRSDE}, part (ii), and references therein) } that rough SDE solutions are rough It{\^o} processes that
satisfy condition \eqref{ass:1}.

Concerning the correct measurability, especially with regard to
$\mathbf{Y}$, we recall that $X ^{\mathbf{Y}}$was constructed as $n
\rightarrow \infty$ limit in probability, uniformly on $[0, T]$, of its
$[n]$-th Picard iteration, starting from
\[ X_t^{[0]} = \xi^{\mathbf{Y}} + f_0 (\xi^{\mathbf{Y}}) (Y_t - Y_0) . \]
The following two lemmas are straight-forward, we refer to \cite{FLZ24x} for
details. (The analogous statement assuming coefficients to be $\mathfrak{B}^{d_X}_T \otimes \mathfrak{C}_T$-progressively measurable also hold, cf \cite{HZ25p}.) {  As before, $\mathscr{C} = \mathscr{C}_T ( \hookrightarrow \mathscr{C}^\alpha([0,T]))$ is a Polish space of rough paths with Borel sets $\mathfrak{C}_T$, cf. Section \ref{sec:not}.} 

\begin{lemma} (Joint measurability)
 If $b, \sigma, (f, f')$ are $\mathfrak{B}^{d_X}_T \otimes \mathfrak{C}_T$-optional (cf. Assumption 4.3 in \cite{FLZ24x}), and $\xi$ is $\mathfrak{F}_0 \otimes \mathfrak{C}_T$-measurable, then the
  \[ (t, \omega, \mathbf{Y}) \mapsto b_t (X^{\mathbf{Y}}_t, \omega ;
     \mathbf{Y }), \sigma_t (X^{\mathbf{Y}}_t, \omega ; \mathbf{Y }), (f_t,
     f'_t) (X^{\mathbf{Y}}_t, \omega ; \mathbf{Y}) \]
  are $\mathfrak{C}_T$-optional so that $X = \{ X^{\mathbf{Y}}
  : \mathbf{Y} \in \mathscr{C}_T \}$ satisfies condition \eqref{ass:2}.
\end{lemma}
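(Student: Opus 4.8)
The plan is to reduce the $\mathfrak{C}_T$-optionality of the solution $X = \{X^{\mathbf{Y}}\}$ to that of its Picard iterates, and then to propagate optionality through the Picard iteration using the stability/measurability machinery already available. First I would observe that the base case $X^{[0]}_t = \xi^{\mathbf{Y}} + f_0(\xi^{\mathbf{Y}})(Y_t - Y_0)$ is $\mathfrak{C}_T$-optional: indeed $(\mathbf{Y},\omega)\mapsto \xi^{\mathbf{Y}}(\omega)$ is $(\mathfrak{C}_T\otimes\mathfrak{F}_0)$-measurable by hypothesis, $f_0$ is $\mathfrak{C}_T$-optional, and $(\mathbf{Y},t)\mapsto Y_t - Y_0$ is jointly continuous hence $(\mathfrak{C}_T\otimes\mathfrak{B}_T)$-measurable by the embedding $\mathscr{C}_T\hookrightarrow\mathscr{C}^\alpha_T$; products and sums of $\mathfrak{C}_T$-optional processes stay $\mathfrak{C}_T$-optional.

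Next I would run the inductive step. Assuming $X^{[n]}$ is a $\mathfrak{C}_T$-optional (continuous, adapted for each $\mathbf{Y}$) process, the compositions
\[
  (t,\omega,\mathbf{Y})\mapsto b_t(X^{[n]}_t,\omega;\mathbf{Y}),\ \sigma_t(X^{[n]}_t,\omega;\mathbf{Y}),\ (f_t,f'_t)(X^{[n]}_t,\omega;\mathbf{Y})
\]
are $\mathfrak{C}_T$-optional, because each coefficient field is $\mathfrak{B}^{d_X}_T\otimes\mathfrak{C}_T$-optional and we are composing with the $\mathfrak{C}_T$-optional map $(t,\omega,\mathbf{Y})\mapsto(t,X^{[n]}_t(\omega;\mathbf{Y}),\omega,\mathbf{Y})$. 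Then $X^{[n+1]}$ is obtained from these by a Lebesgue integral, an It\^o integral, and a stochastic rough integral; by the measurable-selection results recalled in the excerpt (Lemma 8.5 / Proposition 8.11 and Corollary 4.7 of \cite{FLZ24x}, or Kalenberg \cite[Thm 18.25]{Kal21} in the progressive case), each of these integrals admits a $\mathfrak{C}_T$-optional version, so $X^{[n+1]}$ has a $\mathfrak{C}_T$-optional version. This closes the induction and shows every Picard iterate is $\mathfrak{C}_T$-optional.

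Finally I would pass to the limit. Since $X^{\mathbf{Y}}$ is, for each $\mathbf{Y}$, the uniform-on-$[0,T]$ limit in $\mathbb{P}$-probability of $X^{[n],\mathbf{Y}}$, I would upgrade this to a joint statement: along a subsequence (which can be chosen uniformly in $\mathbf{Y}$ using the quantitative contraction estimates from \cite{FHL21x}), $X^{[n_k],\mathbf{Y}}\to X^{\mathbf{Y}}$ uniformly a.s., and a pointwise a.s.\ limit of $\mathfrak{C}_T$-optional processes is $\mathfrak{C}_T$-optional (after the usual completion/null-set bookkeeping, noting that the optional $\sigma$-field is not complete but one works with the progressively measurable, or completed-optional, representative). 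This yields that $(t,\omega,\mathbf{Y})\mapsto X^{\mathbf{Y}}_t(\omega)$ has a $\mathfrak{C}_T$-optional version, and then the compositions $b_t(X^{\mathbf{Y}}_t,\omega;\mathbf{Y})$ etc.\ are $\mathfrak{C}_T$-optional exactly as in the inductive step — which is the assertion that $X$ satisfies \eqref{ass:2}.

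The main obstacle is the limiting step: one must ensure that the convergence of Picard iterates can be arranged \emph{jointly} measurably in $\mathbf{Y}$, i.e.\ that the exceptional null set and the rate of convergence do not depend on $\mathbf{Y}$ in an uncontrolled way. This is handled by the uniform-in-$\mathbf{Y}$ (on bounded rough-path balls $\interleave\mathbf{Y}\interleave_\alpha\le R$) contraction estimates of \cite{FHL21x}, together with a localisation over such balls — a point that \cite{FLZ24x} treats in detail and which we simply cite.
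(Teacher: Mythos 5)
Your proposal follows essentially the same route as the paper, which declares the lemma ``straight-forward'' and defers to \cite{FLZ24x}: induction over the Picard iterates (base case from the measurability of $\xi$ and $f_0$, inductive step via measurable selection for the Lebesgue, It\^o and rough stochastic integrals), followed by a passage to the limit. The only cosmetic difference is in the limit step: where you invoke uniform-in-$\mathbf{Y}$ contraction rates on balls $\interleave\mathbf{Y}\interleave_\alpha\le R$ plus localisation, the cited reference instead selects, measurably in $\mathbf{Y}$, a subsequence along which the iterates converge uniformly in time, which avoids any need for uniform rates — but both devices close the argument.
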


\begin{lemma} (Causality)
  If $b, \sigma, (f, f')$ are causal w.r.t. $\mathbf{Y}$, also $\xi$ only depends on $\mathbf{Y}$ through $\mathbf{Y}_0$, then the 
  \[ (t, \omega, \mathbf{Y}) \mapsto b_t (X^{\mathbf{Y}}_t, \omega ;
     \mathbf{Y }), \sigma_t (X^{\mathbf{Y}}_t, \omega ; \mathbf{Y }), (f_t,
     f'_t) (X^{\mathbf{Y}}_t, \omega ; \mathbf{Y}) \]
     are causal, so that $X = \{ X^{\mathbf{Y}} : \mathbf{Y} \in \mathscr{C}_T
  \}$ satisfies condition \eqref{ass:3}.
\end{lemma}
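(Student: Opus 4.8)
The plan is to reduce the statement to the single claim that the family $X=\{X^{\mathbf{Y}}:\mathbf{Y}\in\mathscr{C}_T\}$ is itself causal in $\mathbf{Y}$: for each $\mathbf{Y}\in\mathscr{C}_T$ and each $t\in[0,T]$ (tacitly assuming $\mathbf{Y}^t$, $\mathbf{Y}^t_\cdot:=\mathbf{Y}_{\cdot\wedge t}$, again lies in $\mathscr{C}_T$, as is already implicit in \eqref{ass:3}), the processes $X^{\mathbf{Y}}$ and $X^{\mathbf{Y}^t}$ are indistinguishable on $[0,t]$; in particular $X^{\mathbf{Y}}_t=X^{\mathbf{Y}^t}_t$ almost surely. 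Granting this, causality of the composed coefficient fields is immediate: for any $\Theta\in\{b,\sigma,(f,f')\}$ one has, a.s.,
\[ \Theta_t\big(X^{\mathbf{Y}}_t,\omega;\mathbf{Y}\big)=\Theta_t\big(X^{\mathbf{Y}^t}_t,\omega;\mathbf{Y}\big)=\Theta_t\big(X^{\mathbf{Y}^t}_t,\omega;\mathbf{Y}^t\big), \]
the first equality by $X^{\mathbf{Y}}_t=X^{\mathbf{Y}^t}_t$ and the second by causality of $\Theta$; and $X_0(\omega;\mathbf{Y})=\xi(\omega,\mathbf{Y})$ depends on $\mathbf{Y}$ only through $\mathbf{Y}_0$ by hypothesis. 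This is \eqref{ass:3} for the composed fields.

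To prove the claim I would use that $X^{\mathbf{Y}}$ is the limit, in probability uniformly on $[0,T]$, of its Picard iterates $X^{[n],\mathbf{Y}}$, and show by induction on $n$ that $X^{[n],\mathbf{Y}}$ and $X^{[n],\mathbf{Y}^t}$ are indistinguishable on $[0,t]$, for every $t$. The base case is explicit from $X^{[0],\mathbf{Y}}_s=\xi^{\mathbf{Y}}+f_0(\xi^{\mathbf{Y}})(Y_s-Y_0)$: one has $\xi^{\mathbf{Y}^t}=\xi^{\mathbf{Y}}$ (both depending on $\mathbf{Y}$ only through $\mathbf{Y}_0=(\mathbf{Y}^t)_0$), $(Y^t)_s=Y_s$ for $s\le t$, and the map $x\mapsto f_0(x;\mathbf{Y})$ is unchanged under $\mathbf{Y}\mapsto\mathbf{Y}^t$ by causality at time $0$, so the two zeroth iterates coincide on $[0,t]$. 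For the inductive step, write out $X^{[n+1],\mathbf{Y}^t}$ on $[0,t]$ and use: (a) each of the Lebesgue, It\^o and rough stochastic integrals over $[0,t]$ depends only on the integrand restricted to $[0,t]$ and on $\mathbf{Y}^t|_{[0,t]}=\mathbf{Y}|_{[0,t]}$ — for the rough stochastic integral this is the locality of the stochastic-sewing construction, which only ever sees $(\delta Y,\mathbb{Y})$ and the stochastic controlled structure of the integrand on the subinterval; and (b) for $s\le t$ the integrands of $X^{[n+1],\mathbf{Y}^t}$ coincide with those of $X^{[n+1],\mathbf{Y}}$, since $\Theta_s(X^{[n],\mathbf{Y}^t}_s,\omega;\mathbf{Y}^t)=\Theta_s(X^{[n],\mathbf{Y}}_s,\omega;\mathbf{Y}^t)=\Theta_s(X^{[n],\mathbf{Y}}_s,\omega;\mathbf{Y})$, using the inductive hypothesis in the first step and causality of $b,\sigma,(f,f')$ together with $(\mathbf{Y}^t)^s=\mathbf{Y}^s$ (for $s\le t$) in the second. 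Hence $X^{[n+1],\mathbf{Y}^t}$ and $X^{[n+1],\mathbf{Y}}$ coincide on $[0,t]$, closing the induction.

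Finally, passing to the limit: for fixed $\mathbf{Y}$ and $t$, both $X^{[n],\mathbf{Y}}_t\to X^{\mathbf{Y}}_t$ and $X^{[n],\mathbf{Y}^t}_t\to X^{\mathbf{Y}^t}_t$ in probability, while $X^{[n],\mathbf{Y}}_t=X^{[n],\mathbf{Y}^t}_t$ a.s.\ for each $n$; hence $X^{\mathbf{Y}}_t=X^{\mathbf{Y}^t}_t$ a.s., and running this over rational $t$ and using sample-path continuity upgrades it to indistinguishability on $[0,t]$. (A slicker route bypasses Picard iteration: by causality of the coefficients, $X^{\mathbf{Y}}|_{[0,t]}$ and $X^{\mathbf{Y}^t}|_{[0,t]}$ solve the \emph{same} RSDE on $[0,t]$ driven by $\mathbf{Y}|_{[0,t]}$, hence coincide by the well-posedness theory of \cite{FHL21x} applied on $[0,t]$.) I expect the only genuine obstacle to be fact (a) — spelling out that $\int_0^t(F,F')\,d\mathbf{Y}$ is a functional of the integrand and of $\mathbf{Y}$ on $[0,t]$ alone — together with the routine bookkeeping promoting the family of a.s.\ identities (one per $(\mathbf{Y},t)$) to indistinguishability; the rest is straightforward.
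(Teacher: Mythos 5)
Your argument is correct and follows exactly the route the paper indicates: the paper gives no proof beyond recalling that $X^{\mathbf{Y}}$ is the limit in probability of its Picard iterates and declaring the lemma ``straight-forward'' with a pointer to \cite{FLZ24x}, and your induction on the iterates (using causality of the coefficients, locality of the Lebesgue/It\^o/rough-stochastic integrals on $[0,t]$, and passage to the limit plus continuity to get indistinguishability) is precisely the intended fleshing-out. Your parenthetical alternative via uniqueness of the RSDE on $[0,t]$ is also valid but is not the paper's route.
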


\begin{theorem} \label{thm:RSDEareRIP}RSDE solutions are rough It\^o processes, which satisfy conditions \eqref{ass:1} and, under the above conditions put forward in the previous lemmas, \eqref{ass:2} and \eqref{ass:3}, respectively.
\end{theorem}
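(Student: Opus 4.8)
The plan is to observe that Theorem \ref{thm:RSDEareRIP} is essentially a bookkeeping statement: it asserts that the three claims already established in the surrounding text (that RSDE solutions satisfy \eqref{ass:1}; that under the optionality/measurability hypotheses of the first lemma they satisfy \eqref{ass:2}; and that under the causality hypotheses of the second lemma they satisfy \eqref{ass:3}) can be collected into one statement. So the proof will not contain new analysis; it will simply cite the relevant pieces and assemble them.

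First I would recall the setup: for fixed $\mathbf{Y} \in \mathscr{C}^\alpha([0,T])$, a solution $X = X^{\mathbf{Y}}$ to the RSDE exists and is unique in the sense of \cite[Definition 4.2]{FHL21x}, and that definition \emph{already} builds in progressive measurability of all coefficient fields $\Theta \in \{b, \sigma, (f,f')\}$ evaluated along $X^{\mathbf{Y}}$, the $L^1([0,T])$ integrability of $b_t(X_t,\omega;\mathbf{Y})$ and $(\sigma\sigma^T)_t(X_t,\omega;\mathbf{Y})$, and the stochastic controlledness of $(F,F')^{\mathbf{Y}}_t = (f_t, (Df_t)f_t + f'_t)(X_t,\omega;\mathbf{Y})$ with respect to $Y$. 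Comparing termwise with the three bullet points of Assumption \ref{ass:1}, this is precisely condition \eqref{ass:1}; hence every RSDE solution is a rough It\^o process in the sense of the definition in Section \ref{sec:RIP}. This gives the first assertion.

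For the second assertion I would invoke the Joint measurability lemma: if $b,\sigma,(f,f')$ are $\mathfrak{B}^{d_X}_T \otimes \mathfrak{C}_T$-optional and $\xi$ is $\mathfrak{F}_0 \otimes \mathfrak{C}_T$-measurable, that lemma says the compositions $(t,\omega,\mathbf{Y}) \mapsto b_t(X^{\mathbf{Y}}_t,\omega;\mathbf{Y})$, etc., are $\mathfrak{C}_T$-optional, which is exactly the content of (A2.opt) in Assumption \ref{ass:2} (and similarly for the progressive variant, citing \cite{HZ25p}); the $\mathfrak{C}_T \otimes \mathfrak{F}_0$-measurability of $X_0$ follows from the assumed measurability of $\xi$. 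For the third assertion I would invoke the Causality lemma: if $b,\sigma,(f,f')$ are causal in $\mathbf{Y}$ and $\xi$ depends on $\mathbf{Y}$ only through $\mathbf{Y}_0$, that lemma yields causality of the composed coefficient fields, which is precisely condition \eqref{ass:3}.

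I do not expect a genuine obstacle here, since the two lemmas carry the real work and are quoted from \cite{FLZ24x}; the only point requiring mild care is to check that the Picard construction $X^{[0]}_t = \xi^{\mathbf{Y}} + f_0(\xi^{\mathbf{Y}})(Y_t - Y_0)$ and the iteration preserve the relevant optionality/causality structure jointly in $(\,\omega,\mathbf{Y})$ — but this is exactly what the cited lemmas assert (the $n\to\infty$ limit in probability, uniformly on $[0,T]$, preserves joint measurability and causality), so the proof reduces to stringing these citations together. The write-up will therefore be a short paragraph: ``The first claim is the termwise comparison of \cite[Definition 4.2]{FHL21x} with Assumption \ref{ass:1}; the second and third are immediate from the two preceding lemmas,'' with the observation that $X_0 = \xi$ handles the initial-condition clauses of Assumptions \ref{ass:2} and \ref{ass:3}.
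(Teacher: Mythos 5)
Your proposal matches the paper exactly: the theorem is stated there without a separate proof, precisely because it is the assembly of the preceding termwise comparison of \cite[Definition 4.2]{FHL21x} with Assumption \ref{ass:1} and the two quoted lemmas for Assumptions \ref{ass:2} and \ref{ass:3}. Your write-up is correct and follows the same route.
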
 
All statements about randomisation and conditioning of rough
It{\^o} processes etc hence apply directly to solutions of rough SDEs. In particular, 
Theorem \ref{thm:doublyIto} (see Proposition 8.2 in \cite{FLZ24x} for similar statements in the case of Brownian randomisation)
  on {     It\^{o}} randomisation of RSDEs with measurable and causal coefficients
  solutions (doubly stochastic) SDE, with notation as before, satisfies
  \[  d \bar{X}_t = b_t (\bar{X}_t, \omega ; \mathbf{S}_{.
     \wedge t} (\omega)) d t + \sigma_t (\bar{X}_t, w ; \mathbf{S}_{. \wedge
     t} (\omega)) d B_t + f_t (\bar{X}_t, w ; \mathbf{S}_{. \wedge t}
     (\omega)) d S_t.\]
  (Remark that $\mathbf{S}$, the {     It\^{o}} lift of $S$, is measurably determined by $S$ so that, from measurability perspective, there is little difference
  between the written $\mathbf{S}_{.
     \wedge t} (\omega)$ dependence and classical causal dependence in $S$.)

\section{Applications to McKean-Vlasov RSDEs}

In \cite{FHL25p}, sufficient conditions on the coefficients
\[ \Theta \in \{ b, \sigma, f \}, \]
are given to ensure well-posedness of McKean-Vlasov RSDEs, of the
form
\begin{equation}
  \label{eq.mkrsde} 
  d X^{\mathbf{Y}}_t = b (X^{\mathbf{Y}}_t, \mu^{\mathbf{Y}}_t) {d t
  + \sigma}  (X^{\mathbf{Y}}_t, \mu^{\mathbf{Y}}_t) d B_t + f 
  (X^{\mathbf{Y}}_t, \mu^{\mathbf{Y}}_t) d \mathbf{Y}_t, \quad
  \mu_t^{\mathbf{Y}} \assign \tmop{Law} (X^{\mathbf{Y}}_t)
\end{equation}
with a given initial datum in  $L_q(\Omega)$ for some $q\ge0$.
Similar to the relationship between \eqref{comMKV} and \eqref{eqn.Npart}, it is shown in \cite{BFHL25p} that \eqref{eq.mkrsde} is the mean-field ($N\to\infty$)  limit of interacting system of RSDEs
\begin{align}\label{eq.Npartrsde}
  {d X_t^{N, i;\mathbf{Y}}} = b ( {{X_t^{N, i;\mathbf{Y}}}} ,
  \mu^{N;\mathbf{Y}}_t  ) {d t + \sigma }  ( {{X_t^{N,
  i;\mathbf{Y}}}} , \mu^{N;\mathbf{Y}}_t  ) d B^i_t + f 
   ( {{X_t^{N,
  i;\mathbf{Y}}}} , \mu^{N;\mathbf{Y}}_t  ) d \mathbf{Y}_t.
\end{align} 
Here, $\mu^{N;\mathbf{Y}}_t$ is the empirical measure of $\{ {X_t^{N, i;\mathbf{Y}}} : 1
\leqslant i \leqslant N \}$.
As before, for each  $\mathbf{Y} \in \mathscr{C}_T$, the very notion of 
solutions requires that
\begin{itemizeminus}
  \item $t \mapsto b (X^{\mathbf{Y}}_t,\ \mu^{\mathbf{Y}}_t), t \mapsto (\sigma \sigma^T
  ) (X^{\mathbf{Y}}_t, \mu^{\mathbf{Y}}_t) \in L^1 ([0, T])$ for a.e.
  $\omega$
  
  \item $t \mapsto (F, F')^{\mathbf{Y}}_t : = (\hat f, (D \hat f) f )  (\mathcal X_t^{\mathbf{Y}})$ is stochastic controlled w.r.t. $Y$.
\end{itemizeminus}
This says precisely that solutions to  McKean-Vlasov RSDEs are rough It{\^o}
processes that satisfy condition \eqref{ass:1}.

Concerning the correct measurability, especially with regard to
$\mathbf{Y}$, we recall that $X ^{\mathbf{Y}}$ can be obtained as $N
\rightarrow \infty$ limit in probability, uniformly on $[0, T]$, of its
 particle approximation \eqref{eq.Npartrsde}. Consequently, as seen in \cite{BFHL25p}, the (rough It\^o) coefficients $(A^i,\Sigma^i,F^i,\hat F^{i,\prime})$ in \eqref{eq.Npartrsde}, i.e. of the  tagged particle $i$, converges to those in \eqref{eq.mkrsde}. 
Due to the results of the previous section to this system of rough SDEs, Theorem \ref{thm:RSDEareRIP} and its preceding lemmas, the
following can be seen. 

\begin{theorem} \label{thm:MKVisRIP}
  The solution $X ^{\mathbf{Y}}$  to the rough McKean-Vlasov SDEs \eqref{eq.mkrsde} is a rough It{\^o} process whose coefficient fields satisfies
  the measurability condition \eqref{ass:2} and 
  the causality condition \eqref{ass:3}.
\end{theorem}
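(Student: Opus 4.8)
The plan is to bootstrap from the ordinary RSDE theory of the previous section (Theorem \ref{thm:RSDEareRIP} and the two lemmas preceding it) via the particle approximation \eqref{eq.Npartrsde}, and then let $N\to\infty$.

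First I would observe that, for each fixed $N$, the interacting system \eqref{eq.Npartrsde} is an ordinary (finite-dimensional) RSDE in $\mathbb{R}^{Nd_X}$: its coefficient fields are the deterministic maps $b,\sigma,f$ precomposed with the continuous empirical-measure map $(x^1,\dots,x^N)\mapsto N^{-1}\sum_j\delta_{x^j}$. In particular they carry \emph{no} explicit $\mathbf{Y}$-dependence — they are constant in $\mathbf{Y}$, hence trivially $\mathfrak{B}^{Nd_X}_T\otimes\mathfrak{C}_T$-optional (and $\mathfrak{B}^{Nd_X}_T\otimes\mathfrak{C}_T$-progressive) and trivially causal — and the i.i.d.\ initial data $X_0^{N,i}\sim X_0$ are $\mathbf{Y}$-independent, so they are $\mathfrak{F}_0\otimes\mathfrak{C}_T$-measurable and causal. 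Applying Theorem \ref{thm:RSDEareRIP} together with its preceding lemmas to this system, the tagged-particle map $(t,\omega,\mathbf{Y})\mapsto X^{N,i;\mathbf{Y}}_t(\omega)$ admits a $\mathfrak{C}_T$-optional (resp.\ $\mathfrak{C}_T$-progressive) version that is causal in $\mathbf{Y}$, and the same holds for its rough-It\^o coefficient fields $(A^i,\Sigma^i,F^i,\hat F^{i,\prime})$.

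Next I would pass to the limit. As recalled before the statement (and established in \cite{BFHL25p}), for each $\mathbf{Y}\in\mathscr{C}_T$ one has $X^{N,i;\mathbf{Y}}\to X^{\mathbf{Y}}$ in probability, uniformly on $[0,T]$, and the coefficient fields of \eqref{eq.Npartrsde} converge (in the stochastic-controlled-rough-path norms of \eqref{ass:1}) to those of \eqref{eq.mkrsde}. Since joint measurability of the form required by \eqref{ass:2} — i.e.\ $(\mathfrak{C}_T\otimes\mathcal{O}_T)$- resp.\ $(\mathfrak{C}_T\otimes\Pi_T)$-measurability — and the causality identity $\Theta_t(\omega;\mathbf{Y})=\Theta_t(\omega;\mathbf{Y}^t)$ are both stable under a.s.\ limits (passing to subsequences along which the convergence is a.s.), the limiting process $X^{\mathbf{Y}}$ and its coefficient fields $A,\Sigma,(F,F')$ inherit \eqref{ass:2} and \eqref{ass:3}; the $X_0$-clauses of \eqref{ass:2}, \eqref{ass:3} are trivial since $X_0$ is $\mathbf{Y}$-independent. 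Alternatively, once $X^{\mathbf{Y}}$ is known to be jointly measurable and causal, one deduces the coefficient-field statements directly: $\mathbf{Y}\mapsto\mu_t^{\mathbf{Y}}=\tmop{Law}(X^{\mathbf{Y}}_t)$ is measurable — as in the proof of Theorem \ref{thm:rcd} — and causal, because $X_t^{\mathbf{Y}}=X_t^{\mathbf{Y}^t}$ gives $\mu_t^{\mathbf{Y}}=\mu_t^{\mathbf{Y}^t}$, and one composes with the Borel fields $b,\sigma,\hat f,(D\hat f)\hat f$ and the Lions lift.

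The main obstacle is making the limit passage rigorous \emph{uniformly enough in} $\mathbf{Y}$. For a single $\mathbf{Y}$ the a.s.\ subsequence argument is routine, but the derivative field $F'_t=((D\hat f)\hat f)(\mathcal{X}_t^{\mathbf{Y}})$, built from the Lions lift and the pair $\mathcal{X}_t^{\mathbf{Y}}=(X_t^{\mathbf{Y}}(\omega),X_t^{\mathbf{Y}}(\cdot))$, is sensitive to the mode of convergence of the particle coefficients, and one needs the quantitative $L^p$-estimates of \cite{BFHL25p, FHL25p} — convergence in the norms of \eqref{ass:1} locally uniformly in $\interleave\mathbf{Y}\interleave_\alpha$ — to select a common (deterministic) subsequence and to identify the limiting coefficient fields. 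A secondary, bookkeeping, point is that the per-$\mathbf{Y}$ statements hold only up to $\mathbf{Y}$-dependent null sets; turning them into the joint statements \eqref{ass:2}, \eqref{ass:3} is done exactly as in the proof of Theorem \ref{thm:jm}, i.e.\ by working throughout with the jointly measurable versions and invoking the measurable-selection results of \cite{FLZ24x} (resp.\ \cite{HZ25p} for the progressive variant) to upgrade the Lebesgue, It\^o and rough-stochastic integrals appearing in \eqref{RIP2} to jointly measurable, causal versions.
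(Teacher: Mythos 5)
Your proposal follows essentially the same route as the paper: view the interacting particle system \eqref{eq.Npartrsde} as an ordinary finite-dimensional RSDE with $\mathbf{Y}$-independent (hence trivially jointly measurable and causal) coefficients, invoke Theorem \ref{thm:RSDEareRIP} and its preceding lemmas, and pass to the $N\to\infty$ limit using the convergence of the tagged particle and its rough-It\^o coefficient fields from \cite{BFHL25p}. The paper gives only this sketch; your write-up supplies more detail (in particular, correctly flagging that the limit passage must preserve joint measurability, for which the appropriate tool is the ``optional version of a limit in probability'' lemma of \cite{FLZ24x} rather than ad hoc $\mathbf{Y}$-dependent a.s.\ subsequences), and is consistent with the intended argument.
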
 



Let us consider the randomisation of   with  Brownian noise $\mathbf{W}
= \left( W, \int \delta W \otimes \tmop{dW} \right)$, independent of $B$. We
know from Theorem \ref{thm.randomise}  that (up to modification)
\[ \bar{X} = X^{\mathbf{Y}}  |_{\mathbf{Y} = \mathbf{W}} \nobracket \]
solves
\[ d \bar{X} = b_t (\bar{X}_t, \bar{\mu} _t) {d t + \sigma_t}  (\bar{X},
   \bar{\mu} _t) d B_t + f_t (\bar{X}, \bar{\mu} _t) d W_t, \quad \bar{\mu} _t
   := \tmop{Law} (X^{\mathbf{Y}}_t) |_{\mathbf{Y} =
   \mathbf{W}}. \nobracket \]
But at the same time, we know from Theorem \ref{thm.randomise}, in conjunction with Theorem \ref{thm:MKVisRIP} that
\[ \bar{\mu} _t
   = \tmop{Law} (X^{\mathbf{Y}}_t) |_{\mathbf{Y} =
   \mathbf{W}} \nobracket = \tmop{Law} (\bar{X} _t | \nobracket
   \mathfrak{F}_T^{\mathbf{W}}) = \tmop{Law} \left( \bar{X} _t |
   \nobracket {\mathfrak{F}^W_T}  \right) = : \mathcal{L} (\bar{X} _t |
   \nobracket W) \]
which identifies $\bar{X} = X^{\mathbf{Y}}  |_{\mathbf{Y} =
\mathbf{W}} \nobracket$ as a solution to the conditional McKean-Vlasov
equation
\[ d X_t = b (X _t, \mathcal{L} (\bar{X} _t | \nobracket W)) {d
   t + \sigma }  (X _t, \mathcal{L} (\bar{X} _t | \nobracket W)) d B_t + f  (X
   _t, \mathcal{L} (\bar{X} _t | \nobracket W)) d W_t . \]
Remark that a conditional propagation of chaos result follows along the same
logic from the propagation of chaos of the rough SDE particle system to the
rough McKean-Vlasov equation; topic of our forthcoming work \cite{BFHL25p}.

\appendix

\end{document}